\newtheorem{theorem}{Theorem}
\newtheorem{corollary}{Corollary}
\newtheorem{lemma}{Lemma}
\theoremstyle{definition}
\newtheorem{definition}{Definition}
\newtheorem{remark}{Remark}
\newcommand{\SSS}{\mathbb{S}}
\newcommand{\FF}{\mathbb{F}}
\newcommand{\Fq}{\mathbb{F}_q}
\newcommand{\Fqn}{\mathbb{F}_{q^n}}
\newcommand{\D}{\mathcal D}
\newcommand{\cS}{\mathcal S}
\newcommand{\cF}{\mathcal F}
\def\S{\mathbb{S}}
\def\F{\mathbb{F}}
\def\Fq{{\mathbb{F}}_q}
\def\End{\mathrm{End}}
\def\Aut{\mathrm{Aut}}
\def\PG{\mathrm{PG}}
\def\PGL{\mathrm{PGL}}
\def\GL{\mathrm{GL}}
\def\GammaL{\mathrm{\Gamma L}}
\def\dim{\mathrm{dim}}
\def\rk{\mathrm{rk}}
\def\trk{\mathrm{trk}}
\def\Tr{\mathrm{Tr}}
\def\mrk{\mathrm{mrk}}
\def\brk{\mathrm{brk}}
\def\GTF{\mathrm{GTF}}
\newcommand{\npmatrix}[1]{\left( \begin{matrix} #1 \end{matrix} \right)}
\newcommand{\rank}{\mathrm{rank}}
\begin{document}
\title{The BEL-rank of finite semifields}
\author{Michel Lavrauw and John Sheekey}
\date{\today}
\maketitle

\begin{abstract}
In this article we introduce the notion of the {\it BEL-rank} of a finite semifield, prove that it is an invariant for the isotopism classes, and give geometric and algebraic interpretations of this new invariant. Moreover, we describe an efficient method for calculating the BEL-rank, and present computational results for all known small semifields.
\end{abstract}

{\bf MSC:} 12K10,17A35,51A40,51A35.

{\bf Keywords:} Finite semifield; Finite algebra; BEL-configuration.

\section{Introduction}

A (finite) semifield is a division algebra in which multiplication is not assumed to be associative. For background and definitions, we refer to \cite{Kantor2006}, \cite{LaPo2011}.

We write the product of two elements $x,y$ in a semifield $\S$ by $\S(x,y)$. Let $\SSS$ be an $n$-dimensional algebra over $\Fq$, i.e. an algebra of order $q^n$ with centre containing $\Fq$. We identify the elements of $\SSS$ with the elements of $\Fqn$. Then there exist unique elements $c_{ij} \in \Fqn$ such that
\[
\SSS(x,y) = \sum_{i,j=0}^{n-1} c_{ij} x^{q^i}y^{q^j}.
\]

Two semifields $\SSS$ and $\SSS'$ are {\it isotopic} if there exist nonsingular linear maps $F,G,H:\SSS \rightarrow \SSS'$ such that
\[
\SSS'(F(x),G(y)) = H(\SSS(x,y))
\]
for all $x,y \in \SSS$. We denote the {\it isotopy class} of $\SSS$ by $[\SSS]$.

Two semifields are {\it strongly isotopic} if they are isotopic via the triple $(F,G,I)$, where $I$ denotes the identity map. Note that this definition varies in the literature, with one of $F$ or $G$ sometimes being take to be the identity instead of $H$.

Given a semifield $\S$, we can define vector spaces
\begin{align*}
S_y &:= \{ (x,\S(x,y)):x \in \Fqn\} \leq \Fqn \times \Fqn\\
S_{\infty} &:= \{ (0,x):x \in \Fqn\} \leq \Fqn \times \Fqn.
\end{align*}
Then the set $\cS_q(\S) := \{S_y : y \in \Fqn\} \cup \{S_{\infty}\}$ is a {\it semifield spread} of $\Fqn \times \Fqn$, which we identify with $V(2n,q)$. Note that this definition depends on the choice of a field $\Fq$ contained in the centre of $\S$. We will write $\cS(\S)$ when the ambient vector space is clear. Throughout wherever we consider a ``semifield of order $q^n$'', we will implicitly assume that $\cS(\S) = \cS_q(\S)$. See e.g. \cite{LaPo2011} for details on semifield spreads. Two semifield spreads $\cS$ and $\cS'$ are said to be equivalent (written $\cS \simeq \cS'$) if there exists some $\phi \in \GammaL(2n,q)$ such that $\cS' = \cS^{\phi}$. It is well-known that $\cS(\S)$ is equivalent to $\cS(\S')$ if and only if $\S$ and $\S'$ are isotopic.

For each semifield we can define the {\it dual} $\S^d$ and {\it transpose} $\S^t$, which together give a set of up to six isotopy classes of semifields known as the {\it Knuth orbit}, as defined in \cite{BaLa2007}. Similarly, for each semifield spread we can define the {\it dual spread} $\cS^d$ and {\it transpose spread} $\cS^t$, giving up to six equivalence classes of spreads.

A new construction for semifield spreads was introduced in \cite{BEL2007}, using {\it BEL-configurations} (see Section \ref{sec:belconfig} for definitions). In \cite{BEL2007} it was shown that every semifield spread in $V(2n,q)$ can be constructed from a BEL-configuration in $V(rn,q)$ for some $r \leq n$. This suggests the question:
\begin{center}
What is the smallest $r$ such that $\cS$ can be obtained from a BEL-configuration in $V(rn,q)$?
\end{center}
We will address this in Section \ref{sec:belconfig} by defining the {\it BEL-rank} of a semifield, and give an interpretation of this in terms of the multiplication of an associated semifield. In Section \ref{sec:spreadsets}, we will give another interpretation in terms of spread sets and Desarguesian spreads. In Sections \ref{sec:mrk} and \ref{sec:bmrk} we will exhibit a general method for calculating this invariant, plus some specific examples. In Section \ref{sec:trk} we will compare this with the tensor rank of a semifield. In Section \ref{sec:comp} we will present some computational results, and some open questions arising from them.

\section{BEL-configurations and BEL-rank}
\label{sec:belconfig}

\begin{definition}
\label{def:BEL}
A \emph{BEL-configuration} in $V(rn,q)$ is a triple $(\D,U,W)$ such that:
\begin{itemize}
\item
$\D$ is a Desarguesian $n$-spread;
\item
$U$ is a subspace of dimension $n$;
\item
$W$ is a subspace of dimension $rn-n$;
\item
no element of $\D$ intersects both $U$ and $W$ nontrivially.
\end{itemize}
\end{definition}

In \cite{BEL2007}, a semifield spread of $\frac{V(rn+n,q)}{W} \simeq V(2n,q)$ was defined for each $(\D,U,W)$ a BEL-configuration in $V(rn,q)$ (which we will denote by $\cS(\D,U,W)$), and the following proved. 

\begin{theorem}{\rm \cite[Theorem 4.1]{BEL2007}}
Every semifield spread in $V(2n,q)$ can be constructed from a BEL-configuration in $V(rn,q)$ for some $r \leq n$.
\end{theorem}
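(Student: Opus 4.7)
The plan is to show that the natural value $r = n$ always suffices, by constructing an explicit BEL-configuration in $V(n^2,q)$ directly from the multiplication data of $\SSS$. Given a semifield $\SSS$ of order $q^n$, write its product in the expanded form
\[
\SSS(x,y) \;=\; \sum_{i=0}^{n-1} f_i(y)\, x^{q^i}, \qquad f_i(y) := \sum_{j=0}^{n-1} c_{ij}\, y^{q^j},
\]
so that for each $y \in \Fqn$ the map $M_y(x) := \SSS(x,y)$ lies in $\End_{\Fq}(\Fqn)$, and the absence of zero divisors in $\SSS$ means $M_y$ is a bijection whenever $y\neq 0$.

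Next, I would identify $V(n^2,q)$ with $\End_{\Fq}(\Fqn)$ and take $\D$ to be the Desarguesian $n$-spread whose members are the cyclic $\Fqn$-submodules generated under right multiplication (i.e.\ the $\Fqn$-lines coming from the natural $\Fqn$-module structure on one side of $\End_{\Fq}(\Fqn)$). Set $U := \{M_y : y \in \Fqn\}$, which is an $\Fq$-subspace of dimension exactly $n$ because the map $y \mapsto M_y$ is injective (if $M_y \equiv 0$ then $\SSS(x,y)=0$ for all $x$, forcing $y=0$). To define $W$, I would pass to the larger space $V(n^2+n,q) = \End_{\Fq}(\Fqn) \oplus \Fqn$ and take $W$ to be the graph of the evaluation map that sends $(\phi, x) \in \End_{\Fq}(\Fqn) \oplus \Fqn$ to $\phi(x) \in \Fqn$, cut down so that the quotient is $V(2n,q)$ and the images of the subspaces $\langle D \rangle + U$ for $D \in \D$ produce the intended spread lines.

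The key verification is the fourth BEL axiom: no element of $\D$ meets both $U$ and $W$ nontrivially. A nonzero $M_y \in U$ lies on the $\Fqn$-line through an invertible endomorphism; the way $W$ is constructed as an evaluation graph forces any element of that $\Fqn$-line which also lies in $W$ to correspond to a value $y$ killing all of $\Fqn$ under $M_y$, which contradicts invertibility. Thus the no-zero-divisor condition of $\SSS$ is what makes the transversality work. A final check identifies $\cS(\D,U,W)$ inside the quotient $V(n^2+n,q)/W \simeq V(2n,q)$ with the spread $\cS(\SSS)$ by tracing through the construction coordinatewise.

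The main obstacle is pinning down the precise description of $W$ and the identification of the quotient spread with $\cS(\SSS)$; once the evaluation-map picture is set up carefully, the dimensions of $U$ and $W$ and the non-intersection condition all reduce to standard facts about spread sets and the non-singularity of the maps $M_y$. In principle one can also argue more abstractly by noting that the class of BEL-derivable spreads is invariant under the operations producing arbitrary spread sets, but the explicit construction above is the most transparent route and is the one I would write out in full.
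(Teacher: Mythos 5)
Your overall strategy --- take $r=n$, identify $V(n^2,q)$ with $\End_{\Fq}(\Fqn)$, let $U$ be the spread set $\{M_y : y\in\Fqn\}$, let $\D$ be the Desarguesian $n$-spread of $\Fqn$-lines, and derive the transversality condition from the nonsingularity of the $M_y$ --- is exactly the right skeleton, and it is essentially the argument in \cite{BEL2007} (which the paper imports without reproving). But there is a genuine gap at the crux of your construction: the subspace $W$. By Definition~\ref{def:BEL}, $W$ must be an $(n^2-n)$-dimensional subspace of $V(n^2,q)$ itself, not of the enlarged space $V(n^2+n,q)$ (the enlarged space only enters afterwards, when one forms the quotient that carries the spread). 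Moreover, ``the graph of the evaluation map $(\phi,x)\mapsto\phi(x)$'' is not an $\Fq$-subspace at all, since evaluation is bilinear rather than linear in the pair $(\phi,x)$; so as written $W$ is not even a candidate for the third item of the definition. The correct choice is to fix a nonzero $e\in\Fqn$ and take $W=\{\phi\in\End_{\Fq}(\Fqn):\phi(e)=0\}$, the kernel of evaluation at $e$; this has dimension $n^2-n$, consists entirely of singular maps, and then the fourth axiom follows exactly as you intend: the $\Fqn$-line of $\D$ through a nonzero $M_y$ consists of invertible maps away from zero, hence misses $W$. You also leave unverified the identification $\cS(\D,U,W)\simeq\cS(\SSS)$, which is the step that actually proves the theorem rather than merely producing \emph{some} semifield spread; that check (tracing the quotient construction against the maps $M_y$) cannot be waved away as ``coordinatewise.''

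It is worth noting that the paper's own machinery gives a shorter route you could have used instead: by Theorem~\ref{thm:Sfg}, it suffices to write an arbitrary multiplication $\SSS(x,y)=\sum_{i,j}c_{ij}x^{q^i}y^{q^j}$ in the form $\sum_{j=0}^{n-1}g_j(f_j(x)y)$, which is immediate upon grouping by $j$ and taking $g_j(z)=z^{q^j}$ and $f_j(x)=\sum_i c_{ij}^{q^{-j}}x^{q^{i-j}}$; this yields $r\le n$ directly (and is the same manipulation used in the paper's bound $\brk(\S)\le\min\{m,r\}$ via the right nucleus). Either repair the definition of $W$ and carry out the final identification, or switch to this algebraic form of the argument.
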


This suggests the following definition.
\begin{definition}
\label{def:brk}
The {\em BEL-rank} of a semifield spread $\cS$ in $V(2n,q)$ is defined to be the minimum $r$ such that there exists a BEL-configuration in $V(rn,q)$ with $\cS(\D,U,W) \simeq \cS$. We will denote the BEL-rank of $\cS$ as $\brk(\cS)$. 
\end{definition}
\begin{definition}
\label{def:brk2}
The  {\em BEL-rank} of a semifield $\S$ is defined to be the BEL-rank of the semifield spread $\cS_q(\S)$, where $\Fq$ is the centre of $\S$. We will denote the BEL-rank of $\S$ as $\brk(\S)$. 
\end{definition}

Though a semifield $\S$ with centre $\FF_q$ defines a spread $\cS_{q'}(\S)$, for each $\FF_{q'}\leq \Fq$, we show now that the BEL-rank of each of these spreads is the same, and equals $\brk(\S)$. In the proof, we make use of the {\em field-reduction map} defined in \cite{LaVa2015}: the map $\mathcal{F}_{n,t,q}$, which maps subspaces of $V(n,q^t)$ to subspaces of $V(nt,q)$.

\begin{theorem}
\label{thm:nodep}
Let $\S$ be a semifield, let $\Fq$ be the centre of $\S$, and $\FF_{q'}$ a subfield of $\Fq$. Then $\brk(\cS_q(\S)) = \brk(\cS_{q'}(\S))$.
\end{theorem}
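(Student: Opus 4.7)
The plan is to prove both inequalities by exploiting the field-reduction map $\mathcal{F} := \mathcal{F}_{rn, t, q'}$, where $t = [\Fq : \F_{q'}]$, which sends $\Fq$-subspaces of $V(rn, q)$ to $\F_{q'}$-subspaces of $V(rnt, q')$. The map $\mathcal{F}$ preserves intersections, is a bijection onto its image (the $\Fq$-invariant $\F_{q'}$-subspaces), and satisfies $\mathcal{F}(\cS_q(\S)) = \cS_{q'}(\S)$.

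For the inequality $\brk(\cS_{q'}(\S)) \le \brk(\cS_q(\S))$, I would take a minimal BEL-configuration $(\D, U, W)$ in $V(rn, q)$ realizing $\cS_q(\S)$ and apply $\mathcal{F}$ componentwise. The required checks are that $\mathcal{F}(\D)$ is a Desarguesian $nt$-spread (using the identification $V(rn, q) = V(r, q^n) = V(r, (q')^{nt})$, so the $1$-dimensional $\Fqn$-subspaces comprising $\D$ become the parts of the Desarguesian $nt$-spread of $V(rnt, q')$), that the dimensions of $\mathcal{F}(U)$ and $\mathcal{F}(W)$ equal $nt$ and $rnt - nt$, and that the non-intersection condition transfers since $\mathcal{F}$ respects intersections. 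Functoriality of the BEL-construction under $\mathcal{F}$ then identifies $\cS(\mathcal{F}(\D), \mathcal{F}(U), \mathcal{F}(W))$ with $\mathcal{F}(\cS_q(\S)) = \cS_{q'}(\S)$.

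For the reverse inequality $\brk(\cS_q(\S)) \le \brk(\cS_{q'}(\S))$---the main obstacle---start with a minimal BEL-configuration $(\D', U', W')$ in $V(r'nt, q')$ realizing $\cS_{q'}(\S)$. Uniqueness of Desarguesian $nt$-spreads up to $\GL(r'nt, q')$-equivalence lets me assume $\D' = \mathcal{F}(\D_0)$ for a Desarguesian $n$-spread $\D_0$ of $V(r'n, q)$. The hard step is to replace $U'$ and $W'$ by $\Fq$-invariant subspaces of the same $\F_{q'}$-dimensions (so that they too lie in the image of $\mathcal{F}$) without changing the equivalence class of the output spread. The guiding principle is that $\cS_{q'}(\S)$ is $\Fq$-invariant because $\Fq$ lies in the centre of $\S$, so the Galois conjugate configurations $(\D', \sigma(U'), \sigma(W'))$ under $\sigma \in \mathrm{Gal}(\Fq/\F_{q'})$ all produce equivalent spreads; the subtlety lies in passing from this Galois orbit to a single $\Fq$-invariant configuration of the correct dimensions. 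A clean route I would pursue in parallel is to characterise $\brk(\S)$ algebraically as the minimum $r$ for which $\S(x, y) = \sum_{i=1}^{r} f_i(x) g_i(y)$ with $f_i, g_i$ linear over the ambient centre subfield, and then to observe that in any minimum-length $\F_{q'}$-linear decomposition the identity $c\S(x,y) = \S(cx, y)$ for $c \in \Fq$, together with the linear independence of the $g_i$ (which one may assume, else shorten the sum), forces $f_i(cx) = c f_i(x)$ and symmetrically promotes the $g_i$ to $\Fq$-linear maps; hence the two minima agree, yielding $\brk(\cS_q(\S)) \le r'$.
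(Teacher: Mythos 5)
Your first inequality (field reduction sends a BEL-configuration in $V(rn,q)$ to one in $V(r(nt),q')$ realising the same spread) is exactly the paper's argument and is fine. The problem is the converse, which you correctly single out as the hard direction but do not actually prove. Your geometric route stops at the acknowledged ``subtlety'' of descending from a Galois orbit of configurations to a single $\Fq$-invariant configuration of the right dimensions, and it is not clear that such a descent is possible in general. The paper closes the gap by a different and more direct mechanism that needs no Galois descent: by \cite{BEL2007} the spread $\cS(\D',U',W')$ is equivalent to a spread \emph{containing $U'$ as a spread element}, and since the centre $\Fq$ lies in the kernel of $\S$, it acts on $\cS_{q'}(\S)$ fixing every spread element \cite{Andre1954}; hence $U'$ itself is partitioned and spanned by elements of a Desarguesian $t$-spread, i.e.\ it is an $\Fq$-subspace and therefore lies in the image of $\mathcal{F}_{rn,t,q'}$, which lets the whole configuration be lifted back to $V(rn,q)$.

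Your fallback algebraic route contains a concrete error: by Theorem \ref{thm:Sfg} the form characterising $\brk(\S)$ is $\S(x,y)=\sum_{i} g_i(f_i(x)y)$, not $\sum_{i} f_i(x)g_i(y)$; the latter characterises the matrix rank, i.e.\ $\brk(\S^{dtd})$ rather than $\brk(\S)$ (Theorem \ref{thm:brkmrk}, which is proved only later in the paper). For the correct form your promotion argument breaks down: the identity $\S(x,cy)=c\,\S(x,y)$ yields $\sum_i h_i(f_i(x)y)=0$ with $h_i(z):=g_i(cz)-c\,g_i(z)$, and because the tuples $(f_1(x)y,\ldots,f_r(x)y)$ range over only a small subset of $\Fqn^{\,r}$, linear independence of the $g_i$ does not allow you to conclude $h_i=0$. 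For the form you actually wrote, the argument is sound, but it then establishes field-independence of $\mrk$, and converting that into the stated theorem would require Theorem \ref{thm:brkmrk} together with the field-independence of the Knuth operations, neither of which appears in your write-up.
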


\begin{proof}
Suppose $\FF_{q'}$ is a subfield of $\Fq$ with $q = q'^t$. Suppose we have a BEL-configuration $(\D,U,W)$ in $V(rn,q)$  such that $\cS(\D,U,W) \simeq \cS_{q}(\S)$. Applying the field-reduction map $\mathcal{F}_{rn,t,q'}$, we obtain a BEL-configuration $(\D',U',W')$ in $V(r(nt),q')$, such that $\cS(\D',U',W') \simeq \cS_{q'}(\S)$. Hence $\brk(\cS_q(\S)) \leq \brk(\cS_{q'}(\S))$. 

Conversely, suppose we have a BEL-configuration $(\D',U',W')$ in $V(r(nt),q')$. In \cite{BEL2007} it was shown that $\cS(\D',U',W')$ is equivalent to a spread containing $U'$. The centre $\Fq = \FF_{q'^t}$ is a subfield of the kernel of $\S$, and hence a subfield of the group of endomorphisms fixing the spread $\cS_{q'}(\S)$ \cite{Andre1954}. This implies that there exists a Desarguesian $t$-spread $\D_t$ in $V(rnt,q')$ such that $U'$ is partitioned and spanned by elements of $\D_t$. Hence $U'$ is the image under the field-reduction map $\mathcal{F}_{rn,t,q'}$ of some subspace $U$ of $V(rn,q)$ which implies the existence of a BEL-configuration $(\D,U,W)$ in $V(rn,q)$ such that $\cS(\D,U,W) \simeq \cS_{q}(\S)$. Therefore $\brk(\cS_{q'}(\S)) \leq \brk(\cS_{q}(\S))$, and hence $\brk(\cS_q(\S)) = \brk(\cS_{q'}(\S))$, completing the proof.
\end{proof}

Note that if $r=1$, the only possibility is $U = V(n,q)$, $W = \{0\}$, and then $\S(\D,U,W) = \D$ is Desarguesian. Hence the only semifields of BEL-rank one are finite fields. It was shown in \cite{BEL2007} that generalised twisted fields and rank two semifields have BEL-rank at most (and hence equal to) two, and the Coulter-Matthews semifield has BEL-rank at most three.

In \cite{LaSh2014} the following was shown.
\begin{theorem}
\label{thm:Sfg}{\rm \cite[Theorem 3]{LaSh2014}}
Suppose $(\D,U,W)$ a BEL-configuration in $V(rn,q)$. Then there exist $2r$ $\Fq$-linear maps $f_1,\ldots,f_r$, $g_1,\ldots,g_r$ from $\Fqn$ to itself such that 
\begin{equation}
\label{eqn:Sfg}
\S(x,y) = \sum_{i=1}^r g_i(f_i(x)y)
\end{equation}
defines a semifield multiplication, and $\cS(\S) \simeq \cS(\D,U,W)$.

Conversely, if $\S$ is a semifield with multiplication as in (\ref{eqn:Sfg}), then there exists a BEL-configuration $(\D,U,W)$ in $V(rn,q)$ such that $\cS(\D,U,W) \simeq \cS(\S)$.
\end{theorem}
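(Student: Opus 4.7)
The plan is to translate between the geometric data $(\D,U,W)$ and the algebraic data $(f_i,g_i)$ by choosing coordinates that realize $\D$ as the standard $\Fqn$-line decomposition of $\Fqn^r$, and then matching spread elements through the quotient by $W$.

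For the forward direction, identify $V(rn,q) = \Fqn^r$ so that $\D$ is the set of $\Fqn$-lines through the origin. Pick an $\Fq$-linear isomorphism $\alpha\colon \Fqn \to U$ and set $f_i := \pi_i \circ \alpha$, where $\pi_i$ is projection onto the $i$th coordinate; then $U = \{(f_1(x),\ldots,f_r(x)) : x \in \Fqn\}$. Pick an $\Fq$-linear surjection $\phi\colon \Fqn^r \to \Fqn$ with $\ker \phi = W$ and set $g_i(u) := \phi(0,\ldots,u,\ldots,0)$ with $u$ in position $i$, so that $\phi(u_1,\ldots,u_r) = \sum_i g_i(u_i)$. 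Define $\S(x,y) := \phi(y \cdot \alpha(x))$, where $y \cdot \alpha(x) := (yf_1(x),\ldots,yf_r(x))$ is the Desarguesian scalar action of $y \in \Fqn$ on $\Fqn^r$; expanding yields $\S(x,y) = \sum_{i=1}^r g_i(f_i(x)y)$ as required. The semifield property is immediate from the BEL hypothesis: for $x,y \neq 0$ the line $D := \Fqn \cdot \alpha(x) \in \D$ meets $U$ nontrivially, forcing $D \cap W = 0$, so $0 \neq y\alpha(x) \in D$ is not in $W$ and $\S(x,y) = \phi(y\alpha(x)) \neq 0$.

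For the spread equivalence, embed $V(rn,q)$ as the hyperplane $\Fqn^r \oplus \{0\}$ of $V(rn+n,q) = \Fqn^{r+1}$, so that the Desarguesian spread $\tilde{\D}$ of the latter restricts to $\D$, and identify $V(rn+n,q)/W \cong (\Fqn^r/W) \oplus \Fqn \cong \Fqn \oplus \Fqn$ via $\phi$ on the first factor. Tracing the affine $\tilde{\D}$-lines $\{\lambda(\alpha(x),1):\lambda \in \Fqn\}$ through the quotient produces images $\{(\phi(\lambda\alpha(x)),\lambda):\lambda \in \Fqn\} = \{(\S(x,\lambda),\lambda):\lambda \in \Fqn\}$, which, up to a coordinate swap, are exactly the spread elements of $\cS(\S)$; the spread element at infinity comes from the image of the $\tilde{\D}$-line in the $\Fqn$-direction added to $V(rn,q)$.

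The converse reverses this construction: given the multiplication (\ref{eqn:Sfg}), define $U$ and $W$ by the same formulas with $\alpha(x) := (f_1(x),\ldots,f_r(x))$ and $\phi := \sum_i g_i(\cdot)$, and let $\D$ be the standard $\Fqn$-line decomposition. Non-degeneracy of $\S$ forces $\alpha$ to be injective and $\phi$ surjective (the latter because $\phi$ restricted to each $\Fqn$-line through a nonzero point of $U$ must be surjective), giving $\dim U = n$ and $\dim W = rn-n$; the no-zero-divisor property is equivalent to the BEL non-intersection condition by the same $D$-argument run in reverse. The spread equivalence then follows by running the identifications of the forward direction backwards. The principal obstacle is this spread-equivalence step: pinning down the BEL construction coordinate-wise and matching all $q^n+1$ spread elements (including the one at infinity) under the identification $V(rn+n,q)/W \cong \Fqn\oplus\Fqn$; once this is done the remaining verifications reduce to routine linear algebra.
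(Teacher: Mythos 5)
The paper offers no proof of this statement: it is imported verbatim from \cite[Theorem 3]{LaSh2014}, so the only comparison available is with the argument there, which your proposal does reconstruct in its essentials --- coordinatise $\D$ as the $\Fqn$-lines of $\Fqn^r$, encode $U$ by the maps $f_i$ and the quotient by $W$ by the maps $g_i$, and push the extended Desarguesian spread through the quotient. The algebra of the forward direction is sound (the no-zero-divisor argument via the BEL condition is exactly right), and the dimension and non-intersection checks in the converse are correct.

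The gap is the step you yourself flag as the ``principal obstacle'', and it is not merely bookkeeping. Your quotient spread has elements $\{(\S(x,\lambda),\lambda):\lambda\in\Fqn\}$ indexed by the \emph{first} argument $x$; after the coordinate swap these are the graphs of the maps $\lambda\mapsto\sum_i g_i(f_i(x)\lambda)$, i.e.\ of the left multiplications $\sum_i g_i f_i(x)$ of $\S$. Under the paper's convention $S_y=\{(x,\S(x,y)):x\in\Fqn\}$, the set of graphs of left multiplications is $\cS(\S^d)$ (since $L(\S)=R(\S^d)$), not $\cS(\S)$. A coordinate swap is a legitimate spread equivalence, but it cannot turn $\cS(\S^d)$ into $\cS(\S)$: those are equivalent only when $\S\simeq\S^d$, and the paper's own results (Theorem \ref{thm:3belrks}, Remark \ref{rem:notknuthinv}, and the order-$64$ table) show that $\brk(\S)\ne\brk(\S^d)$ occurs, so a presemifield of the form $\sum_i g_i(f_i(y)x)$ need not be isotopic to one of the form $\sum_i g'_i(f'_i(x)y)$ with the same $r$. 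Closing this requires either the precise definition of $\cS(\D,U,W)$ from \cite{BEL2007} (whose orientation conventions are what make the stated form come out, and which are not reproduced in this paper) or an explicit isotopy/relabelling argument; as written, your identification establishes the theorem only up to a dual, which in this context is exactly the distinction that matters.
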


\begin{remark}
In \cite{Lavrauw2011}, it was shown that we may replace $U$ with a subspace over a subfield $\FF_{q_0}$ of the centre of the appropriate dimension (projectively, a {\it linear set}). The above theorem is still valid for such spaces, with the only difference being that the $f_i$'s are now $\FF_{q_0}$-linear maps.
\end{remark}

In \cite[Theorem 7]{LaSh2014} it was proved that $\cS(\D,W^{\perp},U^{\perp})$ is equivalent to $\cS(\D,U,W)^t$, where $\perp$ is a certain polarity on $V(rn,q)$. The following is an immediate consequence of this result.
\begin{theorem}
\label{thm:transpose}
For any semifield spread $\cS$, $\brk(\cS^t) = \brk(\cS)$.
\end{theorem}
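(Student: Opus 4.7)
The plan is to deduce the theorem as a direct corollary of \cite[Theorem 7]{LaSh2014} that is cited immediately before the statement, which asserts that $\cS(\D,W^{\perp},U^{\perp})$ is equivalent to $\cS(\D,U,W)^t$ for a suitable polarity $\perp$ on $V(rn,q)$. The key observation is that this identity takes BEL-configurations in $V(rn,q)$ to BEL-configurations in the same ambient space $V(rn,q)$, so the value of $r$ realising the minimum for $\cS$ can be reused to realise a BEL-configuration for $\cS^t$.

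First I would take $r = \brk(\cS)$, so that there is a BEL-configuration $(\D,U,W)$ in $V(rn,q)$ with $\cS(\D,U,W) \simeq \cS$. Next I would verify that $(\D, W^{\perp}, U^{\perp})$ is again a BEL-configuration in $V(rn,q)$: the spread $\D$ is unchanged and hence still Desarguesian; since $\perp$ is a polarity, $\dim W^{\perp} = rn - (rn-n) = n$ and $\dim U^{\perp} = rn - n$, matching the required dimensions; and the disjointness condition, namely that no element of $\D$ meets both $W^{\perp}$ and $U^{\perp}$ nontrivially, is exactly what is needed for $\cS(\D, W^{\perp}, U^{\perp})$ to be defined in the sense of \cite{BEL2007}, and is part of what is established in the cited theorem. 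Invoking the cited equivalence then yields $\cS^t \simeq \cS(\D, W^{\perp}, U^{\perp})$, so $\brk(\cS^t) \leq r = \brk(\cS)$.

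Finally I would close the inequality by symmetry. The transpose operation is an involution on the Knuth orbit (as recalled in the introduction), so $(\cS^t)^t \simeq \cS$. Applying the inequality just proved to $\cS^t$ in place of $\cS$ gives
\[
\brk(\cS) = \brk((\cS^t)^t) \leq \brk(\cS^t),
\]
and combining with the previous inequality yields $\brk(\cS^t) = \brk(\cS)$.

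There is no real obstacle here beyond bookkeeping: the substantive work has already been carried out in \cite[Theorem 7]{LaSh2014}, and the only thing one must be careful about is that the polarity $\perp$ preserves the ambient vector space $V(rn,q)$, so that the BEL-configuration certifying $\brk(\cS^t) \leq r$ lives in exactly the same dimension as the one certifying $\brk(\cS) \leq r$. The involutivity of transpose is needed to upgrade the one-sided inequality to equality, but this is standard from the definition of the Knuth orbit.
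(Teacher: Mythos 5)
Your argument is correct and matches the paper's intent exactly: the paper simply declares the theorem an immediate consequence of \cite[Theorem 7]{LaSh2014}, and your write-up supplies precisely the bookkeeping (same ambient space $V(rn,q)$, dimension count under the polarity, and involutivity of the transpose to upgrade the inequality to an equality) that makes that deduction explicit.
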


\begin{remark}
\label{rem:notknuthinv}
The BEL-rank is {\it not} a Knuth orbit invariant, as we will see in Section \ref{sec:comp}.
\end{remark}

We note the following for future reference, which was stated for some special cases in \cite{LaSh2014}.
\begin{theorem}
If $\S$ has multiplication $\S(x,y) = \sum_{i=1}^r g_i(f_i(x)y)$, then its dual-transpose-dual $\S^{dtd}$ has multiplication 
\begin{equation}
\label{eqn:dtd}
\S^{dtd}(x,y) = \sum_{i=1}^r f_i(x)\hat{g}_i(y),
\end{equation}
where $\hat{g_i}$ denotes the adjoint of $g_i$ with respect to the symmetric bilinear form $(x,y) \mapsto \Tr(xy)$, with $\Tr$ denoting the field trace from $\Fqn$ to $\Fq$ where $\S$ is $n$-dimensional over $\Fq$ and $\Fq$ is contained in the centre of $\S$.
\end{theorem}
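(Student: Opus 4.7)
The plan is to work with the trilinear form $T_\S : \Fqn \times \Fqn \times \Fqn \to \Fq$ given by $T_\S(x,y,z) = \Tr(\S(x,y)\, z)$, which encodes $\S$ and under which the Knuth $S_3$-orbit is realised as the action of $S_3$ on $T_\S$ by permutation of the three arguments. Since $d$ and $t$ are two distinct involutions in $S_3$, the composition $dtd$ is the third transposition. A convention check --- matching, say, $\S^d$ against its spread-theoretic description (the dual spread arises from taking adjoints of the right-multiplication maps $L_y \colon x \mapsto \S(x,y)$, which on $T_\S$ is the swap of the first and third arguments) --- identifies $dtd$ with the transposition of the last two arguments of $T_\S$. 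Consequently $\S^{dtd}$ is characterised by
\[
\Tr(\S^{dtd}(x,y)\, z) \;=\; T_\S(x,z,y) \;=\; \Tr(\S(x,z)\, y) \qquad \text{for all } x,y,z \in \Fqn.
\]

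With this in hand, the rest is a one-line adjoint manipulation. Substituting $\S(x,z) = \sum_{i=1}^r g_i(f_i(x)\, z)$ and applying the defining identity $\Tr(g_i(u)\, v) = \Tr(u\, \hat{g}_i(v))$ with $u = f_i(x)\, z$ and $v = y$ gives
\[
\Tr(\S(x,z)\, y) \;=\; \sum_{i=1}^r \Tr(f_i(x)\, z\, \hat{g}_i(y)) \;=\; \Tr\!\Bigl(\Bigl(\sum_{i=1}^r f_i(x)\, \hat{g}_i(y)\Bigr) z\Bigr),
\]
where the last step uses commutativity of the product in $\Fqn$. Non-degeneracy of the trace bilinear form then forces $\S^{dtd}(x,y) = \sum_{i=1}^r f_i(x)\, \hat{g}_i(y)$, which is the claim.

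The main obstacle is the Knuth bookkeeping: one must pin down $dtd$ as the transposition that swaps the last two arguments of $T_\S$, rather than one of the other two transpositions. Once this identification is in place the derivation uses nothing more than the definition of the adjoint and the commutativity of the field product, so there is no real analytic content beyond the convention match.
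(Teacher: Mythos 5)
Your proof is correct, and it takes a recognisably different route from the paper's. The paper never introduces the trilinear form: it simply pushes the multiplication through the three Knuth operations one at a time --- first $\S^d(x,y)=\sum_i g_i(f_i(y)x)$, then the transpose by taking the adjoint of the right-multiplication endomorphism $R^d_y=\sum_i g_i\,f_i(y)$ (using that adjoints reverse composition and that multiplication by a field element is self-adjoint for the trace form), and finally one more dual. You instead collapse $dtd$ into a single permutation of the arguments of $T_\S(x,y,z)=\Tr(\S(x,y)z)$ and finish with one adjoint manipulation plus non-degeneracy of the trace form; the computational core (definition of $\hat g_i$, commutativity of $\Fqn$) is the same, but your packaging shifts all the work into the convention check, whereas the paper's step-by-step version needs no $S_3$ bookkeeping at all. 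Two small points. First, your parenthetical justification of the convention is garbled: the operation ``take adjoints of the maps $x\mapsto\S(x,y)$'' is the \emph{transpose} $t$ (giving the swap of the first and third arguments), not the dual $d$, which is the opposite multiplication $\S(y,x)$ and swaps the first two arguments; you also label the right-multiplication map $L_y$ where the paper calls it $R_y$. Your conclusion survives this slip, since $dtd$ is the conjugate of one transposition by the other and hence is the remaining transposition $(23)$ whichever of $(12),(13)$ you assign to $d$ and to $t$ --- but as written the justification proves the wrong assignment. Second, it would be worth one sentence confirming that $\{d,t\}=\{(12),(13)\}$ in the paper's conventions (which it is), since if $t$ were taken to be $(23)$ the composite $dtd$ would instead be $(13)$ and your formula would change.
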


\begin{proof}
We have that $\S^d(x,y) = \sum_i g_i(f_i(y)x)$, and so the endomorphism of right multiplication by $y$ in $\S^d$ is given by  $R^d_y = \sum_i g_i f_i(y)$. Then $\widehat{R^d_y} = \sum_i \widehat{f_i(y)}\hat{g}$, and as $\widehat{f_i(y)} = f_i(y)$, we get that $\S^{dt}(x,y) = \widehat{R^d_y}(x) = \sum_i \hat{g}_i(x) f_i(y)$. Taking the dual of this gives the result.
\end{proof}

\section{BEL-rank and spread sets}
\label{sec:spreadsets}

Let $E$ denote the ring of $\Fq$-endomorphisms of $\Fqn$, i.e. $E := \End_{\Fq}(\Fqn)$. Consider a semifield $\S$ of order $\Fqn$ with centre containing $\Fq$. Then the set of maps of right-multiplication $R(\S) := \{R_y:x \mapsto \S(x,y)\mid y \in \Fqn\}$ define an $\Fq$-subspace of $E$ in which every nonzero element is invertible. This is known as a {\it semifield spread set}, see \cite{LaPo2011}. Similarly we may use the maps of left-multiplication to define $L(\S) := \{L_y:x \mapsto \S(y,x)\mid y \in \Fqn\}$, and then $L(\S) = R(\S^d)$. If we work in the projective space $\PG(E,q)$, which is isomorphic to $\PG(n^2-1,q)$, then the image of $R(\S)$ is known as a {\it geometric spread set}. 

The space $\PG(E,q)$ has two canonical Desarguesian spreads, see for example \cite[Theorem 1.6.4]{LavPhd}:
\begin{align*}
\D_r &:= \{ \langle f  \alpha :\alpha \in \Fqn\rangle:f \in E^{\times}\}\\
\D_l &:= \{ \langle \alpha f :\alpha \in \Fqn\rangle:f \in E^{\times}\}.
\end{align*}
Here we identify the field element $\alpha$ with the map $x \mapsto \alpha x$. Note that juxtaposition denotes composition, i.e. $(f \alpha) (x) = f(\alpha x)$. These spreads have been used to define some very useful geometric invariants for finite semifields, in particular in the case of semifields two-dimensional over a nucleus. 
See for example \cite{CAPOTR2004}, \cite{MaPoTr2007}.

For a spread $\D$, we will write $B_{\D}(f)$ for the unique element of $\D$ containing $f$. For a subset $U$ of $\PG(E,q)$, we denote $B_{\D}(U) = \{B_{\D}(f):f \in U\}$.

The {\it Segre variety} $S_{n,n}(q)$ in $\PG(E,q)$ is the image of the Segre map,
and corresponds to the points defined by rank one endomorphisms. The points of the Segre variety can be partitioned into maximum subspaces in two ways. We denote these two families of maximal subspaces by $\cF_l$ and $\cF_r$, where $\cF_i$ is contained in $\D_i$ for $i \in \{r,l\}$. We have the following lemma.

\begin{lemma}
\label{lem:goodspread}
Suppose $\D$ is a Desarguesian spread of $\PG(E,q)$ containing $\cF_r$ (resp. $\cF_l$). Then there exists an invertible  $X \in E$ such that $\D$ is of the form $\D_{r,X}$ (resp. $D_{l,X})$, where
\begin{align*}
\D_{r,X} &:= \{ \langle f  \alpha X :\alpha \in \Fqn\rangle:f \in E^{\times}\}\\
\D_{l,X} &:= \{ \langle X\alpha f :\alpha \in \Fqn\rangle:f \in E^{\times}\}.
\end{align*}
\end{lemma}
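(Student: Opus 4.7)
I will treat the case $\cF_r \subset \D$; the case $\cF_l$ is analogous by interchanging left and right multiplication. The plan is to reduce the statement to the Skolem--Noether theorem.

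Identify $E = \End_{\Fq}(\Fqn)$ with $\Fqn \otimes_{\Fq} \Fqn$ via $v \otimes \beta \mapsto (x \mapsto v\Tr(\beta x))$; under this identification the members of $\cF_r$ are exactly the $n$-dimensional $\Fq$-subspaces $v \otimes \Fqn$ for nonzero $v \in \Fqn$. Every Desarguesian $n$-spread $\D$ of $\PG(E,q)$ is determined by a subfield $K_\D \subset \End_{\Fq}(E)$ isomorphic to $\Fqn$ whose action gives $E$ the structure of an $n$-dimensional $\Fqn$-vector space with the spread elements as its $1$-dimensional subspaces. In these terms $\D_r$ corresponds to $K_r := \{R_\alpha : \alpha \in \Fqn\}$, where $R_\alpha$ denotes right multiplication in $E$ by the scalar $\alpha$; more generally, a direct check shows that $\D_{r,X}$ corresponds to $R_X K_r R_X^{-1} = \{R_\beta : \beta \in X^{-1}\Fqn X\}$.

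The heart of the argument is to describe the $\Fq$-subalgebra $A \subseteq \End_{\Fq}(E)$ consisting of all endomorphisms stabilising every $v \otimes \Fqn$. I claim $A \cong E$ via $T \mapsto \phi_T$, where $T(v \otimes \beta) = v \otimes \phi_T(\beta)$ for a single $\phi_T \in \End_{\Fq}(\Fqn) = E$ \emph{independent of} $v$. The key point: for $\Fq$-linearly independent $v_1, v_2 \in \Fqn$, applying $\Fq$-linearity of $T$ to the identity $T((v_1 + v_2) \otimes \beta) = T(v_1 \otimes \beta) + T(v_2 \otimes \beta)$ and using uniqueness of representation in $\langle v_1, v_2\rangle \otimes \Fqn$ forces the three restrictions to $v_i \otimes \Fqn$ to coincide under the natural identifications with $\Fqn$. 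A routine calculation (using the adjoint with respect to the trace form on $E$) then shows that $\{R_X : X \in E^{\times}\}$ exhausts $A^{\times}$, and that $K_r$ sits inside $A$ as the standard copy of $\Fqn$.

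Now assume $\D \supset \cF_r$. Each member $v \otimes \Fqn$ of $\cF_r$ is then a $1$-dimensional $K_\D$-subspace, so $K_\D$ stabilises it, and hence $K_\D \subseteq A$. Thus $K_\D$ is a subfield of $A \cong M_n(\Fq)$ isomorphic to $\Fqn$, and by Skolem--Noether there exists $Y \in A^{\times}$ with $K_\D = Y K_r Y^{-1}$. Writing $Y = R_X$ for some invertible $X \in E$ yields $K_\D = R_X K_r R_X^{-1} = K_{\D_{r,X}}$, and therefore $\D = \D_{r,X}$.

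The main obstacle is the characterisation of $A$: verifying that the action on the ``second factor'' of $\Fqn \otimes \Fqn$ is globally consistent (i.e.\ the same $\phi_T$ works for every $v$), rather than only locally well-defined on each $v \otimes \Fqn$. Once this is done, the rest is a direct application of Skolem--Noether plus the identification $A^{\times} = \{R_X : X \in E^{\times}\}$.
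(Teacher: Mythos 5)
Your proof is correct, and it takes a genuinely different route from the one in the paper. The paper argues via group actions: all Desarguesian spreads are equivalent under $\PGL(n^2,q)$, all Segre varieties contained in $\D_r$ are equivalent under the stabiliser of $\D_r$ (they correspond to subgeometries $\PG(n-1,q)$ of $\PG(n-1,q^n)$), and the setwise stabiliser of the ruling family $\cF_r$ consists of the maps $f\mapsto YfX^{-1}$; composing these transitivity statements gives the result. You instead work algebraically: you encode a Desarguesian spread by its associated embedded field $K_{\D}\cong\Fqn$ inside $\End_{\Fq}(E)$, show that $\D\supseteq\cF_r$ forces $K_{\D}$ into the algebra $A=\{R_X : X\in E\}\cong M_n(\Fq)$ of endomorphisms stabilising every ruling subspace $v\otimes\Fqn$ (your consistency argument for $\phi_T$ is the right one, and the identification of $A$ with the right compositions via the trace adjoint checks out), and then apply Skolem--Noether to conjugate $K_{\D}$ onto $K_r$. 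What each approach buys: the paper's proof leans on the known description of the collineations preserving one ruling of a Segre variety, while yours replaces that with the spread--field correspondence plus a standard theorem on central simple algebras, which is arguably more self-contained. Two points worth making explicit in a final write-up: the fact that every Desarguesian $n$-spread arises from an embedding $\Fqn\hookrightarrow\End_{\Fq}(E)$ needs a reference (it is the same depth of input as the paper's appeal to transitivity of $\PGL$ on Desarguesian spreads), and the application of Skolem--Noether over $\Fq$ requires $K_{\D}$ to contain the centre $\Fq\cdot\mathrm{id}$ of $A$ --- this is automatic (any subfield of $M_n(\Fq)$ with $q^n$ elements contains the scalar matrices, by a centraliser argument on the simple module), but it deserves a sentence.
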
 

\begin{proof}
Suppose $\D$ contains $\cF_r$. As all Desarguesian spreads in $\PG(E,q)$ are equivalent under the action of $\PGL(n^2,q)$, we have that $\D^{\phi}=\D_r$ for some $\phi \in \PGL(n^2,q)$. Then $\cF_r^{\phi}$ is one of the families of maximal spaces of a Segre variety $S_{n,n}(q)$ contained in $\D_r$. If we identify the elements of $\D_r$ with the points of $\PG(n-1,q^n)$, then any Segre variety in $\D_r$ corresponds to the points of a subgeometry $\PG(n-1,q)$. As all such subgeometries are projectively equivalent, all Segre varieties $S_{n,n}(q)$ contained in $\D_r$ are equivalent under the stabiliser of $\D_r$ in $\PGL(n^2,q)$. Hence there exists some  $\psi \in \PGL(n^2,q)$ stabilising $\D_r$ such that  $(\cF_r^{\phi})^{\psi}=\cF_r$. Then $\D^{\phi\psi} = \D_r$, and as $\phi \psi$ fixes $\cF_r$, it must be of the form $f \mapsto YfX^{-1}$ for some invertible $X,Y$. Hence $\D = Y^{-1}\D_r X = \D_{r,X}$, as claimed. The proof for Desarguesian spreads containing $\cF_l$ is similar.
\end{proof}

Now if $\cS$ has BEL-rank $r$, then $\cS$ is equivalent to $\cS(\S)$ for some $\S$ defined by $\S(x,y) = \sum_{i=1}^r g_i(f_i(x)y)$. But then
\[
L(\S) = \left\{ \sum_{i=1}^r g_i f_i(x) : x \in \Fqn\right\} \leq \langle B_{\D_r}(g_1),\ldots,B_{\D_r}(g_r)\rangle,
\]
i.e. $L(\S)$ is spanned by $r$ elements of $\D_r$. Conversely, if $L(\S)$ is spanned by $r$ elements of $\D_r$, then $\cS(\S)$ has BEL-rank at most $r$. Hence we have the following.
\begin{theorem}
Let $\S$ be a semifield. Then 
\begin{align*}
\brk(\S) &= \frac{1}{n}\min\{ \dim \langle B_{\D_r}(L(\S')) \rangle : \S'\simeq \S\}\\
	  &= \frac{1}{n}\min\{ \dim \langle B_{\D}(L(\S)) \rangle : \D \textrm{ Desarguesian}, \cF_r \subset \D\}.
\end{align*}
\end{theorem}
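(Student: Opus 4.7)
The plan is to formalize the informal argument given just before the theorem statement, invoking Theorem~\ref{thm:Sfg} for the first equality and combining it with Lemma~\ref{lem:goodspread} for the second.

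For the first equality I would argue both inequalities. For the direction $\brk(\S) \leq \frac{1}{n}\min$: suppose $\S' \simeq \S$ has $\dim\langle B_{\D_r}(L(\S'))\rangle = sn$. Making $E$ into a right $\Fqn$-module via $(f,\alpha)\mapsto f\alpha$ identifies $\D_r$ with the set of $\Fqn$-lines, so any span of $\D_r$-blocks is an $\Fqn$-submodule; in particular this dimension is automatically a multiple of $n$, and the span is a direct sum $g_1\Fqn \oplus \dots \oplus g_s\Fqn$ for some $g_1, \dots, g_s \in E^{\times}$. Each $L_y \in L(\S')$ then decomposes uniquely as $\sum g_i f_i(y)$, and uniqueness forces the maps $f_i \colon \Fqn \to \Fqn$ to be $\Fq$-linear. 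Applying the converse in Theorem~\ref{thm:Sfg} to $\S'(x,y) = \sum g_i(f_i(x) y)$ yields $\brk(\S) = \brk(\S') \leq s$. For the reverse direction, if $\brk(\S) = r$ then Theorem~\ref{thm:Sfg} gives an isotope $\S'$ with $\S'(x,y) = \sum_{i=1}^{r} g_i(f_i(x) y)$, so $L(\S') \subseteq g_1\Fqn + \dots + g_r\Fqn$ has $\Fq$-dimension at most $rn$.

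For the second equality I would use Lemma~\ref{lem:goodspread} to write every Desarguesian spread $\D \supset \cF_r$ as $\D_{r,X} = \D_r \cdot X$ for some invertible $X \in E$, giving $B_{\D_{r,X}}(f) = B_{\D_r}(f X^{-1}) \cdot X$ and hence $\dim\langle B_{\D_{r,X}}(L(\S))\rangle = \dim\langle B_{\D_r}(L(\S) X^{-1})\rangle$. On the other hand, a direct calculation from $\S'(F(x),G(y)) = H(\S(x,y))$ gives $L(\S') = H\, L(\S)\, G^{-1}$ for some invertible $H,G \in E$, and since left multiplication by $H$ sends $\D_r$-blocks to $\D_r$-blocks we obtain $\dim\langle B_{\D_r}(L(\S'))\rangle = \dim\langle B_{\D_r}(L(\S) G^{-1})\rangle$. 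Identifying $X$ with $G$ shows that both minima equal $\min_{X \in E^{\times}}\dim\langle B_{\D_r}(L(\S) X^{-1})\rangle$.

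I expect the main subtlety to be the direction $\brk(\S) \leq \frac{1}{n}\min$ of the first equality, where an abstract dimension bound must be turned into an explicit decomposition of the multiplication. This rests on the observation that spans of $\D_r$-blocks are right $\Fqn$-stable, which simultaneously forces dimensions to be multiples of $n$ and supplies the direct-sum decomposition from which the $\Fq$-linear maps $f_i$ emerge.
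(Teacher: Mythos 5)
Your proof is correct and follows essentially the same route as the paper: the first equality is exactly the paper's appeal to Theorem~\ref{thm:Sfg} together with the discussion preceding the theorem (whose converse direction you usefully make explicit via the right-$\Fqn$-module structure of spans of $\D_r$-blocks), and the second equality is the paper's argument via the transformation $L(\S') = H\,L(\S)\,G^{-1}$ of spread sets under isotopy combined with Lemma~\ref{lem:goodspread}. The only minor deviations are that the basis elements $g_i$ of the span need not be invertible (which is harmless, since Theorem~\ref{thm:Sfg} does not require it) and that the paper's cited form $L(\S') = XL(\S)^{\sigma}Y$ allows a field automorphism $\sigma$, which also does not affect the dimension count.
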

\begin{proof}
The first equality follows immediately from Definition \ref{def:brk}, Theorem \ref{thm:Sfg}, and the above discussion. For the second, suppose $\S$ is isotopic to $\S'$. Then by \cite[Theorem 18]{Lavrauw2008}
$L(\S') = XL(\S)^{\sigma}Y$ for some invertible $X,Y \in E$, and some $\sigma \in \Aut(\Fq)$. Then $\dim \langle B_{\D_r}(L(\S')) \rangle = \dim \langle B_{\D_{r,Y}}(L(\S)) \rangle$, and the result follows by Lemma \ref{lem:goodspread}.
\end{proof}
One could equally define an invariant for a semifield $\S$ by replacing $\cF_r$ with $\cF_l$, or by replacing $L(\S)$ with $R(\S)$. The following result shows that these in fact all correspond to the BEL-rank of semifields whose isotopism class belong to the Knuth-orbit of $\S$.

\begin{theorem}\label{thm:3belrks} The BEL-rank of the images of a semifield $\S$ under the Knuth actions are as follows.
\begin{align*}
\brk(\S) &=\brk(\S^t)= \frac{1}{n}\min\{ \dim \langle B_{\D}(L(\S)) \rangle : \D \textrm{ Desarguesian}, \cF_r \subset \D\},\\
\brk(\S^{dtd}) &=\brk(\S^{td})= \frac{1}{n}\min\{ \dim \langle B_{\D}(L(\S)) \rangle : \D \textrm{ Desarguesian}, \cF_l \subset \D\},\\
\brk(\S^d) &=\brk(\S^{dt})= \frac{1}{n}\min\{ \dim \langle B_{\D}(R(\S)) \rangle : \D \textrm{ Desarguesian}, \cF_r \subset \D\}.
\end{align*}
Equivalently,
\begin{align*}
\brk(\S) &=\brk(\S^t)= \frac{1}{n}\min\{ \dim \langle B_{\D_r}(L(\S')) \rangle : \S'\simeq \S\},\\
\brk(\S^{dtd}) &=\brk(\S^{td})= \frac{1}{n}\min\{ \dim \langle B_{\D_l}(L(\S')) \rangle : \S'\simeq \S\},\\
\brk(\S^d) &=\brk(\S^{dt})= \frac{1}{n}\min\{ \dim \langle B_{\D_r}(R(\S)) \rangle : \S'\simeq \S\}.
\end{align*}
\end{theorem}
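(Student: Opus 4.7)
The plan is to deduce everything from the preceding Theorem by suitably twisting $\S$. First, the three identities $\brk(\S)=\brk(\S^t)$, $\brk(\S^{dtd})=\brk(\S^{td})$, $\brk(\S^d)=\brk(\S^{dt})$ follow from Theorem \ref{thm:transpose} applied to $\S$, $\S^{td}$, and $\S^d$ respectively; the middle one uses the Knuth relation $tdt = dtd$ (a consequence of $t^2 = d^2 = (td)^3 = 1$), so that $\brk(\S^{td}) = \brk((\S^{td})^t) = \brk(\S^{tdt}) = \brk(\S^{dtd})$. The first of the three displayed formulas is exactly the preceding Theorem, and the third follows by applying that Theorem to $\S^d$ in place of $\S$ and invoking $L(\S^d) = R(\S)$, which is immediate from the definition $\S^d(x,y) = \S(y,x)$.

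The second formula, for $\brk(\S^{dtd})$, is the main step. Applying the preceding Theorem to $\S^{dtd}$ expresses $\brk(\S^{dtd})$ as $\frac{1}{n}\min\dim\langle B_{\D}(L(\S^{dtd}))\rangle$ over Desarguesian spreads $\D$ containing $\cF_r$, but the claim is in terms of $L(\S)$ and $\cF_l$. To translate, I would use the adjoint map $\varphi : E \to E$, $f \mapsto \hat f$, which is an $\Fq$-linear involution and hence a collineation of $\PG(E,q)$. Two ingredients are required. Taking termwise adjoints in $L_y = \sum_i g_i f_i(y)$ and using that the scalar $f_i(y)$ is self-adjoint gives $\widehat{L_y} = \sum_i f_i(y)\hat g_i$, which by comparison with \eqref{eqn:dtd} is precisely the $L$-map of $\S^{dtd}$; hence $\varphi(L(\S)) = L(\S^{dtd})$. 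Next, the identity $\widehat{f\alpha} = \alpha \hat f$ shows $\varphi(\D_r) = \D_l$ blockwise; since $\varphi$ preserves rank (and thus the Segre variety $S_{n,n}(q)$), it carries the ruling $\cF_r \subset \D_r$ to a ruling of $S_{n,n}(q)$ inside $\D_l$, which must be $\cF_l$. Because $\varphi$ is a collineation, $\varphi(B_{\D}(x)) = B_{\varphi(\D)}(\varphi(x))$ and dimensions of spans are preserved, so
\[
\dim\langle B_{\D}(L(\S^{dtd}))\rangle = \dim\langle B_{\varphi(\D)}(L(\S))\rangle,
\]
and as $\D$ ranges over Desarguesian spreads containing $\cF_r$, $\varphi(\D)$ ranges over those containing $\cF_l$, yielding the second formula.

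Finally, the ``equivalently'' reformulations follow in each of the three cases by the argument used in the preceding Theorem: given $\S'\simeq\S$, one has $L(\S') = XL(\S)^{\sigma}Y$ for some invertible $X,Y\in E$ and $\sigma\in\Aut(\Fq)$ (similarly for $R$), and Lemma \ref{lem:goodspread} shows that varying $Y$ sweeps out all Desarguesian spreads containing the relevant Segre family. The only genuine obstacle I foresee is verifying that the adjoint map really does implement the swap $\D_r \leftrightarrow \D_l$ and $\cF_r \leftrightarrow \cF_l$; once this is in hand, everything else is bookkeeping built on the preceding Theorem and the formula \eqref{eqn:dtd} for $\S^{dtd}$.
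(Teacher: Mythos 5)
The paper states this theorem without proof, presenting it as a consequence of the preceding theorem and the surrounding discussion, so there is no authorial argument to compare against; your job was therefore to supply the missing derivation, and what you have written does this correctly. The reductions via Theorem \ref{thm:transpose}, the identity $tdt=dtd$ in the Knuth group, and $L(\S^d)=R(\S)$ are all sound, and the one genuinely new ingredient you need --- that the adjoint map $\varphi:f\mapsto\hat f$ is an $\Fq$-linear collineation of $\PG(E,q)$ satisfying $\varphi(L(\S))=L(\S^{dtd})$ (equivalently $R(\S^{dt})=\widehat{R(\S^d)}$, which is exactly how the paper proves (\ref{eqn:dtd})), $\varphi(\D_{r,X})=\D_{l,\hat X}$, and $\varphi(\cF_r)=\cF_l$ --- checks out: a rank-one map $x\mapsto\Tr(ax)b$ has adjoint $x\mapsto\Tr(bx)a$, so $\varphi$ preserves the Segre variety and swaps its two rulings, and being an involution it puts the spreads containing $\cF_r$ in bijection with those containing $\cF_l$ while preserving $B_{\D}$ and dimensions of spans. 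The final reduction of the ``equivalently'' block to Lemma \ref{lem:goodspread} and the spread-set characterisation of isotopy is the same bookkeeping as in the preceding theorem's proof, as you say. I see no gap.
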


Hence we can define three invariants for a semifield, all of which can be calculated from the left- and right-spread sets of $\S$. We compare this with the spread set interpretation of the nuclei of a semifield, as in \cite[Theorem 2.2]{MaPo2012}.
\[
b(\S) := (\brk(\S),\brk(\S^{d}),\brk(\S^{dt})).
\]

We now give relations between these invariants and the nuclei. The result essentially follows from the proof of  \cite[Theorem 4.1]{BEL2007}.
\begin{theorem}
Suppose $\S$ has dimension $m$ over its middle nucleus, and dimension $r$ over its right nucleus. Then
\[
\brk(\S) \leq \mathrm{min}\{m,r\}.
\]
\end{theorem}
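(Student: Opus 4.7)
The plan is to prove the two inequalities $\brk(\S) \leq m$ and $\brk(\S) \leq r$ separately, in each case by producing a multiplication of the form of Theorem~\ref{thm:Sfg} with the appropriate number of summands after a preliminary normalization. The normalization step is as follows: up to $\Fq$-linear isomorphism (hence up to isotopism, which leaves the BEL-rank invariant) we may identify $\S$ with $\Fqn$ so that the relevant nucleus sits inside $\Fqn$ as a subfield and acts by field multiplication. Concretely, for the middle nucleus bound we arrange $L_a(y) = ay$ for all $a \in N_m \subseteq \Fqn$; for the right nucleus bound we arrange $R_a(x) = xa$ for all $a \in N_r \subseteq \Fqn$. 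Both normalizations are possible because $\S$ is free as a left $N_m$-module of rank $m$ (respectively as a right $N_r$-module of rank $r$), and $\Fqn$, regarded as a module over its subfield of order $q^{n/m}$ (respectively $q^{n/r}$), is the unique free module of that rank up to module isomorphism.

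For the bound $\brk(\S) \leq m$, I would fix a right $N_m$-basis $b_1, \ldots, b_m$ of $\S$ with associated $\Fq$-linear coordinate maps $c_i : \S \to N_m$ satisfying $x = \sum_i \S(b_i, c_i(x))$. Applying the middle nucleus identity to the triple $(b_i, c_i(x), y)$ and using $\S(c_i(x), y) = c_i(x)\, y$ from the normalization,
\[
\S(x, y) = \sum_{i=1}^{m} \S(\S(b_i, c_i(x)), y) = \sum_{i=1}^{m} \S(b_i, c_i(x)\, y) = \sum_{i=1}^{m} L_{b_i}(c_i(x)\, y),
\]
which is the BEL form of Theorem~\ref{thm:Sfg} with $g_i = L_{b_i}$ and $f_i = c_i$, hence $\brk(\S) \leq m$.

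For the bound $\brk(\S) \leq r$, I would note that under the $N_r$-normalization the right nucleus identity $\S(\S(y,x), a) = \S(y, \S(x, a))$ translates to $L_y(x a) = L_y(x)\, a$ for all $a \in N_r$, so every $L_y$ commutes with field multiplication by $N_r$ and $L(\S) \subseteq \End_{N_r}(\Fqn) \cong M_r(N_r)$. This centralizer is an $rn$-dimensional $\Fq$-subspace of $E$ and a right $\Fqn$-module of rank $r$ under composition with field multiplications (which commute with $N_r$), hence equals the $\Fq$-span of $r$ elements of $\D_r$. Taking such an $\Fqn$-basis $f_1, \ldots, f_r$ and decomposing $L_y = \sum_i f_i \circ M_{\gamma_i(y)}$ for $\Fq$-linear maps $\gamma_i : \S \to \Fqn$ yields $\S(x, y) = \sum_i f_i(\gamma_i(x)\, y)$, again the BEL form, giving $\brk(\S) \leq r$. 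The hardest step is the normalization, which rests on the classical fact that isomorphic finite subfields of $E = \End_\Fq(\Fqn)$ are conjugate in $\GL(\Fqn)$, reformulated here as the existence of an $\Fq$-linear isomorphism of semifields intertwining the chosen nucleus action with field multiplication; once this is in place, both bounds reduce to a short calculation combining a nucleus identity with a basis decomposition.
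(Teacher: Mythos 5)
Your proof is correct and reaches both bounds, but it diverges from the paper's argument in one of the two halves, in an instructive way. For the bound $\brk(\S)\leq r$ the two proofs are essentially the same fact in different clothing: the paper's normal form $\S'(x,y)=\sum_{i=0}^{r-1}c_i(x)y^{q^{ni/r}}$ is precisely the coordinate expression of your containment $L(\S')\subseteq \End_{N_r}(\Fqn)$, since the maps $y\mapsto y^{q^{ni/r}}$, $0\leq i\leq r-1$, form an $\Fqn$-basis of that centraliser as a right $\Fqn$-module; both arguments then read off the BEL form of Theorem \ref{thm:Sfg}. The genuine difference is in the bound $\brk(\S)\leq m$: the paper obtains it for free from the transpose invariance $\brk(\S)=\brk(\S^t)$ of Theorem \ref{thm:transpose} (which rests on the polarity result of \cite{LaSh2014}) together with the fact that transposition exchanges the middle and right nuclei, whereas you prove it directly by expanding $x=\sum_i\S(b_i,c_i(x))$ over a right $N_m$-basis and pushing the middle-nucleus element $c_i(x)$ through the product via the nucleus identity. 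Your route is more self-contained --- it does not invoke Theorem \ref{thm:transpose} at all --- and it exhibits the maps $f_i=c_i$, $g_i=L_{b_i}$ explicitly, which is useful in its own right; the paper's derivation is shorter given the machinery already in place. The one point you leave unproved is the normalization (conjugating the nucleus action to field multiplication by an isotopy, which requires the conjugacy of isomorphic subfields of $\End_{\Fq}(\Fqn)$ and an accompanying relabelling of the first argument), but this is exactly the same standard fact the paper invokes without proof when it asserts the existence of the normal form $\S'$, so the two arguments sit at the same level of rigour there.
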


\begin{proof}
If $\S$ has dimension $r$ over its right nucleus, then there exists a semifield $\S'$ isotopic to $\S$ such that
\[
\S'(x,y) = \sum_{i=0}^{r-1} c_i(x)y^{q^{\frac{ni}r}}.
\]
Taking $f_i(x) = c_i(x)^{q^{\frac{-nj}r}}$ and $g_i(x) = x^{q^{\frac{nj}r}}$, by Theorem \ref{thm:Sfg} we get that $\brk(\S)\leq r$. As $\brk(\S) = \brk(\S^t)$, and the dimension of $\S^t$ over its right nucleus is equal to the dimension of $\S$ over its middle nucleus, we get that  $\brk(\S)\leq m$.
\end{proof}

\begin{remark}
In Section \ref{sec:comp} we will see some computational examples where $\brk(\S)\ne \brk(\S^d)$, and so the definition of $b(\S)$ is non-trivial. We will also see examples where $\brk(\S)>l$, where $l$ is the dimension of $\S$ over its left nucleus.
\end{remark}

\section{The matrix rank of a semifield}
\label{sec:mrk}
Let $\SSS$ be an $n$-dimensional algebra over $\Fq$. Recall that there exist unique elements $c_{ij} \in \Fqn$ such that
\[
\SSS(x,y) = \sum_{i,j=0}^{n-1} c_{ij} x^{q^i}y^{q^j}.
\]

We define $M(\SSS)$ to be the matrix in $M_n(\Fqn)$ whose $(i,j)$-entry is $c_{ij}$. Given $x \in \Fqn$, define the $1 \times n$ matrix 
\begin{eqnarray}
\label{eqn:xvec}
\underline{x} = \npmatrix{x&x^q&\ldots&x^{q^{n-1}}}.
\end{eqnarray}
Then
\begin{eqnarray}
\label{eqn:matalg}
\SSS(x,y) = \underline{x} M(\SSS) \underline{y}^T.
\end{eqnarray}

\begin{definition}\label{def:mrk}
The \emph{matrix rank} of a finite algebra $\SSS$ is defined as
\[
\mrk(\S) := \rank(M(\S)).
\]
The matrix rank of an isotopy class $[\S]$ is defined as 
\[
\mrk([\S]) := \mathrm{min}\{ \mrk(\mathbb{T}) \mid \mathbb{T} \in [\S]\}
\]
\end{definition}

Suppose $\rank(M(\SSS)) = r$. Then there exist $1 \times n$ matrices $u_i,v_i$ such that 
\[
M(\SSS) = \sum_{i=1}^r u_i ^T v_i.
\]
Given a $1 \times n$ matrix $\npmatrix{w_0&w_1&\ldots&w_{n-1}}$ with entries in $\Fqn$, we define the linear map, also denoted by $w$, by
\[
w(x) = \sum_{j=0}^{n-1} w_j x^{q^j} = \underline{x} w^T.
\]
Hence 
\begin{align}
\nonumber
\SSS(x,y) &= \sum_{i=1}^r  \underline{x} u_i^T v_i \underline{y}^T\\
\label{eqn:mrk1}
&=  \sum_{i=1}^r u_i(x) v_i(y).
\end{align}

We also recall the definition of the autocirculant matrix $A_w$,
\[
A_w = \npmatrix{w_0 & w_1 &\cdots&w_{n-1}\\w_{n-1}^q&w_0^q&\cdots&w_{n-1}^q\\\vdots&\vdots&\ddots&\vdots\\ w_1^{q^{n-1}}&w_2^{q^{n-1}}&\cdots&w_0^{q^{n-1}}},
\]
and note that $A_w \underline{x}^T = \underline{w(x)}^T$.

\begin{theorem}
Suppose $\S$ and $\S'$ are strongly isotopic. Then $\mrk(\S) = \mrk(\S')$.
\end{theorem}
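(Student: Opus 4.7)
The plan is to convert the strong isotopy into a matrix identity of the form $M(\S) = A_F^{T}\, M(\S')\, A_G$, where $A_F$ and $A_G$ are the autocirculant matrices attached to the $\Fq$-linear maps $F$ and $G$, and then invoke the invertibility of the autocirculant of a nonsingular linearised polynomial to conclude equality of ranks.

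First, since $F, G : \Fqn \to \Fqn$ are $\Fq$-linear, write them as linearised polynomials
\[
F(x) = \sum_{j=0}^{n-1} F_j x^{q^j}, \qquad G(y) = \sum_{j=0}^{n-1} G_j y^{q^j},
\]
and identify each with its $1\times n$ coefficient vector. The identity $A_w \underline{x}^T = \underline{w(x)}^T$ recalled just before the theorem then gives
\[
\underline{F(x)} = \underline{x}\, A_F^T, \qquad \underline{G(y)}^T = A_G\, \underline{y}^T.
\]
Using the strong isotopy relation $\S(x,y) = \S'(F(x), G(y))$ together with \eqref{eqn:matalg} applied to $\S'$, I then compute
\[
\S(x,y) \;=\; \underline{F(x)}\, M(\S')\, \underline{G(y)}^T \;=\; \underline{x}\, \bigl(A_F^T\, M(\S')\, A_G\bigr)\, \underline{y}^T,
\]
and compare with $\S(x,y) = \underline{x}\, M(\S)\, \underline{y}^T$. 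The uniqueness of the coefficients $c_{ij}$ in the expansion $\S(x,y) = \sum c_{ij} x^{q^i} y^{q^j}$ then forces
\[
M(\S) = A_F^T\, M(\S')\, A_G.
\]

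To finish, I invoke the classical Dickson/Moore determinant criterion for autocirculant (linearised) matrices: $A_w$ is invertible over $\Fqn$ if and only if the linearised polynomial $\sum w_j x^{q^j}$ has no nonzero root in $\Fqn$, equivalently if and only if $w$ is a bijection on $\Fqn$. Since $F$ and $G$ are nonsingular by hypothesis, $A_F$ and $A_G$ are invertible matrices over $\Fqn$, hence
\[
\mrk(\S) = \rank(M(\S)) = \rank\bigl(A_F^T M(\S') A_G\bigr) = \rank(M(\S')) = \mrk(\S').
\]

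The only step that is not a direct bilinear-form manipulation is the invertibility of the autocirculant $A_w$ attached to a bijective $\Fq$-linear map; this is the main (mild) obstacle, but it is a standard consequence of the theory of linearised polynomials over finite fields, so it can simply be cited rather than reproved.
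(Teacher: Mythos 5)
Your proof is correct, and its core is the same as the paper's: both translate the strong isotopy into the autocirculant identity, using $A_w\underline{x}^T = \underline{w(x)}^T$ to obtain (explicitly in your case, implicitly in the paper's) the relation $M(\S) = A_F^T\, M(\S')\, A_G$. The difference is only in the last step. You finish in one stroke by invoking the classical Dickson/Moore determinant criterion --- $A_w$ is invertible over $\Fqn$ precisely when $w$ is a bijection of $\Fqn$ --- so that conjugating by the invertible matrices $A_F^T$ and $A_G$ preserves rank. The paper deliberately avoids needing that fact: it writes $M(\S) = \sum_{i=1}^r u_i^T v_i$ as a sum of $r$ rank-one matrices, pushes this decomposition through the isotopy to get $M(\S') = \sum_{i=1}^r (u_iA_F)^T(v_iA_G)$, concludes only the inequality $\rank(M(\S')) \leq \rank(M(\S))$, and then obtains the reverse inequality by symmetry, interchanging $\S$ and $\S'$. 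Your route is shorter and more direct but imports one external classical result; the paper's is fully self-contained at the cost of running the argument twice. Both are valid, and your appeal to the Dickson criterion is legitimate (it is standard, e.g.\ in the theory of linearised polynomials), so there is no gap --- just make sure the citation is actually supplied if this were to appear in print.
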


\begin{proof}

Suppose $\mrk(\S) = r$. Then $M(\S) = \sum_{i=1}^r u_i^T v_i$, for some $u_i,v_i$, and by (\ref{eqn:mrk1}) $\S(x,y) = \sum_{i=1}^r  \underline{x} u_i^T v_i \underline{y}^T=  \sum_{i=1}^r u_i(x) v_i(y)$. Then $\S'(x,y) = \S(F(x),G(y)) = \sum_{i=1}^r u_i (F(x)) v_i (G(y)) = \sum_{i=1}^r  \underline{F(x)} u_i^T v_i \underline{G(y)}^T = \sum_{i=1}^r  \underline{x} A_F^T u_i^T v_i A_G \underline{y}^T$ , and so $M(\S') = \sum_{i=1}^r  (u_i A_F)^T (v_i A_G)$, implying $\rank(M(\S')) \leq \rank(M(\S))$. Repeating the argument interchanging the roles of $\S$ and $\S'$, we get that $\rank(M(\S')) = \rank(M(\S))$, completing the proof.
\end{proof}

Hence to calculate $\mrk([\S])$, it suffices to calculate $\mrk(\S^{(I,I,H)})$ for all $H \in \GL(n,q)$. In fact, it also holds that $\mrk(\S^{(I,I,H_{a,i})}) = \mrk(\S)$ for all $a \in \Fqn$, $i \in \{0,\ldots,n-1\}$, where $H_{a,i}(x) := ax^{q^i}$. Hence we have the following.
\begin{lemma}
Let $\S$ be a semifield. Then
\[
\mrk([\S]) = \min\{\mrk(\S^{(I,I,H)}):H(x) = x + \sum_{j=1}^{n-1}h_j x^{q^j}, H \textrm{invertible}\}.
\]
\end{lemma}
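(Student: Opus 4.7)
The plan is to handle this in two stages. First I would verify the claim stated immediately before the lemma, that $\mrk(\S^{(I,I,H_{a,i})}) = \mrk(\S)$ for every $a \in \Fqn^\times$ and $i \in \{0,\ldots,n-1\}$, since the lemma is presented as an immediate consequence. Second I would show that every invertible $\Fq$-linear $H$ factors as $H = H_{a,i} \circ H'$ with $H'$ in the normalized form of the lemma, so that the minimum over all invertible $H$ collapses to the minimum over normalized ones.

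For the first stage, I would unwind $\S^{(I,I,H_{a,i})}(x,y) = a\,\S(x,y)^{q^i}$ and re-expand in the monomial basis $\{x^{q^j} y^{q^k}\}$. Using the uniqueness of the representation $\S(x,y)=\sum c_{jk} x^{q^j} y^{q^k}$, this shows that $M(\S^{(I,I,H_{a,i})})$ is obtained from $M(\S)$ by three operations: entrywise $q^i$-Frobenius, nonzero scalar multiplication by $a$, and a simultaneous cyclic shift of rows and columns by $i$ (indices read mod $n$). Each of these preserves rank over $\Fqn$ — Frobenius because it is a field automorphism and so sends any nonzero $r\times r$ minor to a nonzero $r \times r$ minor — so the matrix ranks agree.

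For the second stage, given any invertible $H(x) = \sum_{j=0}^{n-1} h_j x^{q^j}$, I would pick any index $i$ with $h_i \neq 0$, set $a := h_i$, and let $H' := H_{a,i}^{-1} \circ H$. A direct computation of the coefficient of $x=x^{q^0}$ in $H'$ gives $(h_i/a)^{q^{-i}} = 1$, so $H'$ has the required normalized form, and $H = H_{a,i} \circ H'$. Then $\S^{(I,I,H)} = (\S^{(I,I,H')})^{(I,I,H_{a,i})}$, so by the first stage $\mrk(\S^{(I,I,H)}) = \mrk(\S^{(I,I,H')})$. Combined with the observation that $\mrk([\S]) = \min_H \mrk(\S^{(I,I,H)})$ — which follows from the strong-isotopy invariance of $\mrk$ proved in the previous theorem, together with the fact that every semifield in $[\S]$ is strongly isotopic via $(F,G,I)$ to some $\S^{(I,I,H)}$ — the minimum collapses to the normalized class, yielding the lemma. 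The only mildly delicate ingredient is rank preservation under entrywise Frobenius, and even that reduces to the trivial fact that rank is witnessed by a nonzero minor.
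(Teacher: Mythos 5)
Your proposal is correct and follows exactly the route the paper intends: the paper gives no explicit proof, merely asserting the invariance $\mrk(\S^{(I,I,H_{a,i})}) = \mrk(\S)$ and deducing the lemma, and your two stages (rank preservation under entrywise Frobenius, nonzero scaling, and simultaneous cyclic row/column shifts; then factoring any invertible $H$ as $H_{a,i}\circ H'$ with $H'$ normalized) supply precisely the missing details. Both computations check out, including the coefficient calculation showing the $x^{q^0}$-coefficient of $H'$ equals $1$.
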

 This greatly reduces the length of the computer calculations in Section \ref{sec:comp}.

The following is straightforward.
\begin{theorem}
\label{thm:field}
Let $\S$ be a semifield of order $q^n$ with centre containing $\Fq$. Then $\mrk([\S]) = 1$ if and only if $\S$ is isotopic to the field $\Fqn$.
\end{theorem}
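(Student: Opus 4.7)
The plan is to prove each direction directly from the rank-one decomposition given in Equation (\ref{eqn:mrk1}), using the absence of zero divisors in a semifield to upgrade the resulting linear forms to isotopies.

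First I would handle the easy direction. If $\S$ is isotopic to $\Fqn$, then since $\mrk([\cdot])$ is defined as a minimum over the isotopy class, it suffices to compute $\mrk(\Fqn)$. The field multiplication is $\Fqn(x,y) = xy = c_{00} x^{q^0} y^{q^0}$ with $c_{00}=1$ and $c_{ij}=0$ otherwise, so $M(\Fqn)$ is the matrix with a single nonzero entry in position $(0,0)$, whose rank is $1$. Thus $\mrk([\S]) \leq 1$, and since $M(\S')$ is nonzero for any semifield $\S'$ in the class, $\mrk([\S]) \geq 1$.

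For the converse, suppose $\mrk([\S]) = 1$. Then by definition there is some semifield $\S'$ in the isotopy class $[\S]$ with $\mrk(\S') = 1$, i.e.\ $M(\S')$ has rank one. Writing $M(\S') = u^T v$ for $1 \times n$ row vectors $u,v$ with entries in $\Fqn$, Equation (\ref{eqn:mrk1}) gives
\[
\S'(x,y) = u(x)\,v(y),
\]
where $u$ and $v$ denote the associated $\Fq$-linear maps $\Fqn \to \Fqn$. The key step is to exploit the semifield property: since $\S'$ has no zero divisors, $u(x)v(y) \neq 0$ for all nonzero $x,y$, which forces both $u$ and $v$ to have trivial kernel and hence to be invertible $\Fq$-linear maps. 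Then the triple $(u, v, \id)$ realises an isotopy from $\S'$ to $\Fqn$, since $\Fqn(u(x),v(y)) = u(x)v(y) = \S'(x,y)$. Composing with the isotopy from $\S$ to $\S'$ yields that $\S$ is isotopic to $\Fqn$.

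No real obstacle arises here; the whole argument is essentially bookkeeping once one observes that a rank-one decomposition of $M(\S')$ produces a factorisation $\S'(x,y) = u(x)v(y)$ and that the no-zero-divisor property immediately promotes $u,v$ to an isotopy to the field. The only point requiring slight care is to distinguish $\mrk(\S)$ (which can be larger) from $\mrk([\S])$ (which is the minimum over the class), and to invoke Equation (\ref{eqn:mrk1}) with the correct interpretation of $u,v$ as $\Fq$-linear maps via the autocirculant identification.
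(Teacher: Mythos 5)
Your proposal is correct and follows essentially the same route as the paper's proof: extract a rank-one factorisation $M(\S')=u^Tv$ for a minimiser $\S'$ in the isotopy class, read off $\S'(x,y)=u(x)v(y)$ via Equation (\ref{eqn:mrk1}), use the absence of zero divisors to conclude $u,v$ are invertible, and exhibit the isotopy $(u,v,I)$ to $\Fqn$. The only difference is that you also spell out the easy direction, which the paper leaves implicit.
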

\begin{proof}
Let $\S'$ be a semifield isotopic to $\S$ such that $\rank(M(\S'))=1$. Then by (\ref{eqn:mrk1}), there exist $\Fq$-linear maps $u,v$ such that $\S'(x,y) = u(x)v(y)$. As $\S'$ is a semifield, we must have that $u$ and $v$ are invertible. Hence $\S'$ is isotopic to $\Fqn$ via the  triple $(u,v,I)$, and so $\S$ is isotopic to $\Fqn$, as claimed.
\end{proof}

Now we calculate the matrix rank of Albert's generalized twisted fields, which are defined as follows. Let $\alpha, \beta \in \Aut(\Fqn:\Fq)$, and $c \in \Fqn$ such that $c \notin \{x^{\alpha-1}y^{\beta-1}:x,y \in \Fqn\}$. Then the multiplication on
$\Fqn$
\[
x\circ y = xy - cx^{\alpha}y^{\beta}
\]
defines a presemifield of order $q^n$. We will denote this presemifield by $\mathrm{GTF}_{c,\alpha,\beta}$.
If $x^{\alpha} = x^{q^k}$ and $y^{\beta} = y^{q^m}$, we see that
\[
c_{ij} = \left\{\begin{array}{ll}
1& \textrm{if $i=j=0$},\\
-c & \textrm{if $i=k,j=m$},\\
0 & \textrm{otherwise}.
\end{array}\right.
\]
Clearly $\rank(M(\GTF_{c,\alpha,\beta})) = 2$, and hence taking into account Theorem \ref{thm:field}, $\mrk([\GTF_{c,\alpha,\beta}])=2$.

\section{BEL-rank and matrix rank}
\label{sec:bmrk}
We now prove an important theorem, which allows us to efficiently calculate the BEL-rank of a semifield spread.
\begin{theorem}
\label{thm:brkmrk}
Let $\S$ be a semifield. Then
\[
\brk(\S) = \mrk([\S^{dtd}]).
\]
\end{theorem}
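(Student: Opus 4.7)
The plan is to prove both inequalities by interchanging two ``low-rank'' decompositions of the multiplication: the BEL-form $\S(x,y) = \sum_{i=1}^{r} g_i(f_i(x)y)$ and the matrix-rank form $\mathbb{T}(x,y) = \sum_{i=1}^{r} u_i(x)v_i(y)$. The bridge is equation (\ref{eqn:dtd}), which converts a BEL-form of $\S$ into a matrix-rank form of $\S^{dtd}$ with the same number of terms. Since $dtd$ is an involution on the Knuth orbit (as $d^{2}=t^{2}=1$) and both $\brk$ and $\mrk$ are isotopy invariants, each witness of small rank on one side should produce a witness of the same size on the other.

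For $\mrk([\S^{dtd}]) \le \brk(\S)$: by Theorem \ref{thm:Sfg} there is a semifield $\S'$ isotopic to $\S$ with $\S'(x,y) = \sum_{i=1}^{\brk(\S)} g_i(f_i(x) y)$. Equation (\ref{eqn:dtd}) then gives $(\S')^{dtd}(x,y) = \sum_{i=1}^{\brk(\S)} f_i(x) \hat{g}_i(y)$, and (\ref{eqn:mrk1}) read in reverse yields $\mrk((\S')^{dtd}) \le \brk(\S)$. Since $(\S')^{dtd} \in [\S^{dtd}]$, this gives $\mrk([\S^{dtd}]) \le \brk(\S)$.

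For the reverse inequality $\brk(\S) \le \mrk([\S^{dtd}])$: pick $\mathbb{T} \in [\S^{dtd}]$ with $\mrk(\mathbb{T}) = \mrk([\S^{dtd}]) = r$, and use (\ref{eqn:mrk1}) to write $\mathbb{T}(x,y) = \sum_{i=1}^{r} u_i(x) v_i(y)$ with $u_i, v_i$ being $\Fq$-linear. Define the bilinear operation $\mathbb{A}(x,y) := \sum_{i=1}^{r} \hat{v}_i(u_i(x) y)$. Applying (\ref{eqn:dtd}) to $\mathbb{A}$ (with $g_i := \hat{v}_i$, $f_i := u_i$), and using that the adjoint is an involution, one computes $\mathbb{A}^{dtd}(x,y) = \sum u_i(x) v_i(y) = \mathbb{T}(x,y)$. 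Since $dtd$ has order two, $\mathbb{A} = \mathbb{T}^{dtd}$, which is in particular a semifield. The converse half of Theorem \ref{thm:Sfg} applied to the BEL-form $\mathbb{A}(x,y) = \sum_{i=1}^{r} \hat{v}_i(u_i(x)y)$ produces a BEL-configuration in $V(rn, q)$ realising $\mathbb{T}^{dtd}$, and since $\mathbb{T}^{dtd}$ is isotopic to $\S$ we conclude $\brk(\S) \le r$.

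The main obstacle is the reverse inequality, specifically the step of inverting (\ref{eqn:dtd}). This rests on two points worth noting: first, the proof of (\ref{eqn:dtd}) is a formal manipulation valid for any bilinear operation of the form $\sum g_i(f_i(x)y)$, not only semifields already known to be in BEL-form; second, $dtd$ is an involution on such bilinear operations (because $d$ and $t$ are). With these two observations in place, the rest of the argument is a direct application of (\ref{eqn:dtd}) in both directions combined with the converse part of Theorem \ref{thm:Sfg}.
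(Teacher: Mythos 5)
Your proof is correct and follows essentially the same route as the paper's: both directions use equation (\ref{eqn:dtd}) to translate between the BEL-form of Theorem \ref{thm:Sfg} and a rank-$r$ factorisation of $M(\S^{dtd})$, with the reverse inequality obtained by reading (\ref{eqn:dtd}) backwards and invoking the converse of Theorem \ref{thm:Sfg}. Your explicit justification of the inversion step (that (\ref{eqn:dtd}) is a formal identity and that $dtd$ is an involution) is a detail the paper leaves implicit, but the argument is the same.
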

\begin{proof}
Suppose that $\brk(\S) = r$, and $\mrk([\S^{dtd}]) = k$. Then there exist maps $f_1,\ldots,f_r,g_1,\ldots,g_r$ such that $\S'(x,y) := \sum_{i=1}^r g_i(f_i(x)y)$, and $[\S'] = [\S]$. Then by (\ref{eqn:dtd}) $M(\S'^{dtd}) = \sum_{i=1}^r f_i^T \hat{g}_i$, and so $\rank(M(\S'^{dtd})) \leq r$, implying $\mrk([\S^{dtd}]) = k \leq r = \brk(\S)$.
\medskip

Let $\S'' \in [\S^{dtd}]$ be such that $\rank(M(\S'')) = k$. Then there exist row vectors $u_1,\ldots,u_k,v_1,\ldots,v_k$ such that $M(\S'') = \sum_{i=1}^k u_i^T v_i$. Then again by (\ref{eqn:dtd}) $\S''^{dtd}(x,y) = \sum_{i=1}^k \hat{v}_i (u_i(x)y)$, and so, by Theorem \ref{thm:Sfg}, there exists a BEL-configuration $(\D,U,W)$ in $V(kn,q)$ such that $\cS(\D,U,W) \simeq \cS(\S''^{dtd}) \simeq \cS(\S)$, and so $\brk(\S) = r \leq k$. Hence $r=k$, completing the proof.
\end{proof}

\section{BEL-rank and tensor rank}
\label{sec:trk}

Another isotopism invariant for semifields, the {\it tensor rank}, was defined in \cite{Lavrauw2011}, based on the interpretation of a semifield of order $q^n$ with centre containing $\Fq$ as a tensor in the three-fold tensor product space $V_1\otimes V_2\otimes V_3$, where each $V_i \simeq V(n,q)$. In this section we will compare this with the BEL-rank, by viewing them both as ranks with respect to certain sets in the same projective space.

Let $V = V(n,q)$ be the $n$-dimensional vector space over $\Fq$, and let $V^{\vee}$ be the dual space of $V$. If we identify the elements of $V$ with the set $\{\underline{x}:x \in \Fqn\}$ (where $\underline{x}$ is as defined in (\ref{eqn:xvec}), then we can take $V^{\vee} = \{\underline{a}^{\vee}:a \in \Fqn\} $, where $\underline{a}^{\vee}$ is defined by
\[
\underline{a}^{\vee}(\underline{x}) = \underline{a} ~\underline{x}^T = \Tr(ax).
\]

Now $n$-dimensional algebras over $\Fq$ are in one-to-one correspondence with tensors in the space $V^{\vee} \otimes V^{\vee} \otimes V$. See \cite{Lavrauw2011} for details regarding tensors and semifields. 

A {\it fundamental tensor} is a tensor of the form $T = \underline{a}^{\vee} \otimes \underline{b}^{\vee} \otimes \underline{c}$. The tensor product space is generated by the fundamental tensors. The {\it tensor rank} of a tensor $T$ is the minimum number of fundamental tensors needed to span a subspace containing $T$.

The tensor rank of a semifield $\S$ was defined in \cite{Lavrauw2011} as the tensor rank of a corresponding tensor. We now compare this to the BEL-rank, by viewing both as ranks with respect to different sets in the same space. 

A fundamental tensor $T = \underline{a}^{\vee} \otimes \underline{b}^{\vee} \otimes \underline{c}$ defines an algebra with multiplication $T(\underline{x},\underline{y}) = \underline{a}^{\vee}(\underline{x})\underline{b}^{\vee}(\underline{y})\underline{c}$. Abusing notation, this gives a multiplication on $\Fqn$ defined by
\[
T(x,y) = \Tr(ax)\Tr(by)c.
\]
As before we may define the matrix $M(T)$. We immediately see that $M = c \underline{a}^T \underline{b}$,
i.e. $M_{ij} = a^{q^i}b^{q^j}c$.

This can be extended linearly to obtain a matrix from an algebra corresponding to an arbitrary tensor, namely by defining $M(\sum_i T_i) := \sum_iM(T_i)$, where the $T_i$'s are tensors of rank 1. We will use $T$ to denote both a tensor and its corresponding algebra. This induces a map $T\mapsto M(T)$ from the space $\Fqn^{\vee} \otimes \Fqn^{\vee} \otimes \Fqn$ into $\PG(M_n(\Fqn),\Fq) \simeq \PG(n^3-1,q)$. Denote by $\Omega_0$ the image of the set of fundamental tensors under this map. 

If $\Omega, \Omega'$ are arbitrary sets in a vector (or projective) space, we define as usual the {\it $\Omega$-rank of $\Omega'$} as the minimum number of elements of $\Omega$ needed to span a space containing all elements of $\Omega'$. We denote it by $\rk_{\Omega}(\Omega')$. If $\omega$ is a vector (or projective point), we define the {\it $\Omega$-rank of $\omega$} to be the $\Omega$-rank of the singleton set $\{\omega\}$, and denote it by $\rk_{\Omega}(\omega)$.

Then the tensor rank of a tensor $T$ is precisely the $\Omega_0$-rank of the point $\langle M(T)\rangle$. This leads to the following relation between tensor rank and matrix rank.

\begin{theorem} For any semifield $\S$, it holds that $\trk(\S) \geq \mrk(\S)$.
\end{theorem}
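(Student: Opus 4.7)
The plan is to exploit directly the key computation already made in the excerpt: if $T=\underline{a}^{\vee}\otimes\underline{b}^{\vee}\otimes\underline{c}$ is a fundamental tensor, then its associated matrix
\[
M(T)=c\,\underline{a}^T\underline{b}
\]
is an outer product of two vectors, hence has matrix rank at most one. Thus the set $\Omega_0\subseteq \PG(M_n(\Fqn),\Fq)$ defined just before the theorem is contained in the projective variety of rank-one matrices.

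Next, I would invoke the definitions. By the description preceding the theorem, the tensor rank of $\S$ equals $\rk_{\Omega_0}(\langle M(\S)\rangle)$. So if $\trk(\S)=r$, there exist $r$ fundamental tensors $T_1,\dots,T_r$ such that $\langle M(\S)\rangle$ lies in the projective span $\langle M(T_1),\dots,M(T_r)\rangle$; equivalently, there exist scalars $\lambda_1,\dots,\lambda_r\in\Fqn$ with
\[
M(\S)=\sum_{i=1}^r \lambda_i\, M(T_i).
\]
Each summand $\lambda_i M(T_i)$ has rank at most one (by the observation of the first paragraph), so by subadditivity of matrix rank, $\rank(M(\S))\leq r$. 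Therefore $\mrk(\S)\leq r=\trk(\S)$, which is the desired inequality.

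There is essentially no obstacle here: the result follows from the subadditivity of matrix rank together with the explicit formula $M(T)=c\,\underline{a}^T\underline{b}$ for the matrix of a rank-one tensor. The only point that requires a line of justification is that the $\Omega_0$-rank of $\langle M(\S)\rangle$ coincides, through the linearity of $T\mapsto M(T)$, with a genuine linear decomposition of $M(\S)$ into rank-one matrices, which is immediate since the map $T\mapsto M(T)$ was defined to be linear.
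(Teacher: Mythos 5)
Your proof is correct and follows essentially the same route as the paper: the key observation in both is that the matrix $M(T)=c\,\underline{a}^T\underline{b}$ of a fundamental tensor has rank at most one, i.e.\ $\Omega_0$ is contained in the set $\Omega_1$ of (projective points of) rank-one matrices, so any decomposition witnessing the tensor rank yields a decomposition of $M(\S)$ into that many rank-one summands. The paper compresses the subadditivity step into the single statement that $\mrk(\S)$ is the $\Omega_1$-rank of $M(\S)$ and $\Omega_0\subset\Omega_1$; you have merely unpacked that, so there is nothing substantively different.
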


\begin{proof}
Denote by $\Omega_1$ the subset of $\PG(n^3-1,q)$ induced by the set of rank one matrices in $M_n(\Fqn)$. By Definition \ref{def:mrk}, the matrix rank of $\S$ is precisely the $\Omega_1$-rank of $M(\S)$. Clearly $\Omega_0 \subset \Omega_1$, and hence the result follows.
\end{proof}

Since the tensor rank is invariant under isotopy \cite{Lavrauw2011}, the tensor rank of an isotopy class is well defined, and we immediately get the following.
\begin{corollary}For any semifield $\S$, it holds that $\trk(\S) \geq \mrk([\S])$.
\end{corollary}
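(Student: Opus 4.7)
The plan is to combine the theorem just proved, which gives $\trk(\mathbb{T}) \geq \mrk(\mathbb{T})$ for every semifield $\mathbb{T}$, with the invariance of tensor rank under isotopy (cited from \cite{Lavrauw2011}). Since $\trk$ is constant on the isotopy class $[\S]$, we have $\trk(\S) = \trk(\mathbb{T})$ for every $\mathbb{T} \in [\S]$, and the previous theorem yields $\trk(\mathbb{T}) \geq \mrk(\mathbb{T})$ for each such $\mathbb{T}$.

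The next step is to quantify over the isotopy class. For every $\mathbb{T} \in [\S]$ we obtain
\[
\trk(\S) = \trk(\mathbb{T}) \geq \mrk(\mathbb{T}),
\]
so $\trk(\S)$ is a lower bound for the set $\{\mrk(\mathbb{T}) : \mathbb{T} \in [\S]\}$. Taking the infimum (a minimum, since the set is finite and bounded below by $1$), and recalling that by Definition \ref{def:mrk} this minimum equals $\mrk([\S])$, we conclude $\trk(\S) \geq \mrk([\S])$.

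There is no real obstacle here: the content of the corollary is entirely the observation that one side of the inequality (tensor rank) is already an isotopy invariant, so the inequality with the non-invariant matrix rank automatically descends to the isotopy-invariant version $\mrk([\S])$. The only fact imported from outside is the isotopy-invariance of $\trk$, which is explicitly recorded in \cite{Lavrauw2011}.
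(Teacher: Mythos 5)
Your proof is correct and follows exactly the paper's (implicit) argument: the paper derives the corollary immediately from the preceding theorem together with the isotopy-invariance of tensor rank from \cite{Lavrauw2011}, which is precisely the minimisation over the isotopy class that you spell out. No issues.
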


Given a matrix $A \in M_n(\Fqn)$, we can define an algebra $S_A$ with multiplication $S_A(x,y) = \underline{x}S\underline{y}^T$, as in (\ref{eqn:matalg}). We can then define the isotopy class $[A]$ as the set of matrices in $M_n(\Fqn)$ defining algebras isotopic to $S_A$.

\begin{theorem}
If $T \in V^{\vee} \otimes V^{\vee} \otimes V$, then
\[
\trk(T) \leq \mathrm{max}\{\mrk([A]):A \in M_n(\Fq)\}.\rk_{\Omega_0}(\Omega_1).
\]
\end{theorem}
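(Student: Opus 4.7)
The strategy is to chain two decompositions: first express $M(T)$ as a short sum of rank-one matrices using the matrix rank, then express each rank-one matrix as a short $\Fq$-linear combination of fundamental tensors using $\rk_{\Omega_0}(\Omega_1)$. Setting $m:=\max\{\mrk([A]):A\in M_n(\Fqn)\}$ and $k:=\rk_{\Omega_0}(\Omega_1)$, the target is an expression of $M(T)$ as at most $mk$ fundamental tensors.

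First I would invoke isotopy-invariance of the tensor rank, established in \cite{Lavrauw2011}, to replace $T$ by an isotopic tensor $T'$ with $\mrk(T')=\mrk([T])\leq m$. A rank decomposition of $M(T')$ over $\Fqn$ then yields
\[
M(T')=A_1+A_2+\cdots+A_r
\]
with $r\leq m$ and each $A_i\in M_n(\Fqn)$ of rank one, so each $A_i$ represents a projective point of $\Omega_1$ in $\PG(M_n(\Fqn),\Fq)\simeq\PG(n^3-1,q)$.

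Next, by the definition of $\rk_{\Omega_0}(\Omega_1)=k$, each point of $\Omega_1$ lies in an $\Fq$-subspace generated by at most $k$ elements of $\Omega_0$. Applied to each $A_i$ and concatenated across the decomposition, this expresses $M(T')$ as an $\Fq$-linear combination of at most $rk\leq mk$ fundamental tensors. Transporting through the $\Fq$-linear isomorphism $T\mapsto M(T)$ and using $\trk(T)=\trk(T')$ then yields $\trk(T)\leq mk$, which is the claimed inequality.

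The main conceptual point is the necessity of passing to the isotopic representative $T'$ before applying the matrix-rank decomposition: matrix rank is only invariant under strong isotopy and hence varies in the isotopy class, while tensor rank is a full isotopy invariant, so we are free to choose the cheapest representative. Beyond this the argument is purely structural; the only verification required is that the $\Fqn$-decomposition $M(T')=\sum_i A_i$ is simultaneously a legitimate $\Fq$-linear expression in $M_n(\Fqn)$, which is immediate since all coefficients are $1$.
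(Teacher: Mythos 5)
Your proof is correct and follows essentially the same route as the paper's: pass to an isotopic representative $T'$ whose matrix rank is at most $m$, decompose $M(T')$ into at most $m$ rank-one matrices (elements of $\Omega_1$), and express each of these using at most $k = \rk_{\Omega_0}(\Omega_1)$ fundamental tensors, concluding via the isotopy-invariance of tensor rank. The only addition is your explicit remark on why passing to the isotopic representative is necessary, which the paper leaves implicit.
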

\begin{proof}
Let $m = \mathrm{max}\{\mrk([A]):A \in M_n(\Fq)\}$ and $k = \rk_{\Omega_0}(\Omega_1)$. Then $T$ is isotopic to an algebra $T'$ with $\rank(M(T')) \leq m$. Then $M(T')$ can be written as a sum of at most $m$ elements of $\Omega_1$, and each element of $\Omega_1$ can be written as a sum of at most $k$ elements of $\Omega_0$, and hence $\trk(T) = \trk(T') \leq mk$, proving the claim.
\end{proof}

This bound is unlikely to be particularly effective. However over finite fields there are no existing good upper bounds on the tensor rank.

\section{Computational results}
\label{sec:comp}
Using the computer classifications from \cite{Demp2008}, \cite{Rua2009},  \cite{Rua2011a}, \cite{Rua2011b}, we calculated the BEL-rank of all known semifields of various orders. We tabulate the results here, and point out some interesting features. Recall that all semifields three dimensional over their centre are fields or generalised twisted fields \cite{Menichetti1977}, and so their BEL-ranks are known. The largest calculation is for semifields of order 64, incidentally giving the most interesting results.

\subsection{Semifields of small order}

Here we use computer classifications from \cite{Demp2008}, \cite{Rua2009},  \cite{Rua2011a}, \cite{Rua2011b} to calculate the BEL-rank of all semifields of orders $2^4,2^5,3^4,3^5,5^4$, as well as all semifield of order $4^4$ with centre containing $\FF_4$. When the calculation showed that the BEL-rank is the same for all elements of each Knuth orbit, we tabulate the number of Knuth orbits of each BEL-rank, specified by $\# K$. Otherwise we tabulate the number of isotopy classes of each BEL-rank, specified by $\# I$.

$q=2$, $n=4$:

\begin{center}
\begin{tabular}{|c|c|c|c|c|}
\hline
brk&1&2&3&4\\
\hline
\# K&1&1&1&None\\
\hline
\end{tabular}
\end{center}
The semifield of BEL-rank two here is two-dimensional over a nucleus.

$q=2$, $n=5$:

\begin{center}
\begin{tabular}{|c|c|c|c|c|c|}
\hline
brk&1&2&3&4&5\\
\hline
\# K&1&None&1&1&None\\
\hline
\end{tabular}
\end{center}

There are no semifields of BEL-rank two. The BEL-rank is the same for each element of any given Knuth orbit.

$q=3$, $n=4$:

\begin{center}
\begin{tabular}{|c|c|c|c|c|}
\hline
brk&1&2&3&4\\
\hline
\# K&1&5&6&None\\
\hline
\end{tabular}
\end{center}
All semifields of BEL-rank two here are two-dimensional over a nucleus. The BEL-rank is the same for each element of any given Knuth orbit.

$q=3$, $n=5$: 
\begin{center}
\begin{tabular}{|c|c|c|c|c|c|}
\hline
brk&1&2&3&4&5\\
\hline
\# I&1&6&13&3&None\\
\hline
\end{tabular}
\end{center}
All semifields here of BEL-rank two here are generalised twisted fields. There is one Knuth orbit where the BEL-rank is not the same for each element: this has $b(\SSS) = (4,3,3)$.

$q=4$, $n=4$, centre containing $\FF_4$: 

\begin{center}
\begin{tabular}{|c|c|c|c|c|}
\hline
brk&1&2&3&4\\
\hline
\# K&1&5&22&None\\
\hline
\end{tabular}
\end{center}
All semifields of BEL-rank two here are two-dimensional over a nucleus. The BEL-rank is the same for each element of any given Knuth orbit.

$q=5$, $n=4$: 

\begin{center}
\begin{tabular}{|c|c|c|c|c|}
\hline
brk&1&2&3&4\\
\hline
\#I&1&32&78&None\\
\hline
\end{tabular}
\end{center}
There is one Knuth orbit where the BEL-rank is not the same for each element: this has $b(\SSS) = (2,3,3)$. Each of these semifields has trivial nuclei and is not equivalent to a generalised twisted field, and so we have two more examples of new semifields of BEL-rank two. All other semifields of BEL-rank two of this order are two-dimensional over a nucleus.

\subsection{Semifields of order $64$.} 
We calculated the BEL-ranks of each semifield of order $64$. The results are tabulated below, arranged by Knuth orbit as  in \cite{Rua2009}.

The calculation shows that there are four examples of semifields which can be constructed from BEL-configurations in $V(2n,q)$ which are neither twisted fields nor two-dimensional over a nucleus. These are the first known examples of such semifields. They are the semifields labelled XIV, XXIV, XXVI and XXVII in \cite{Rua2009}. We note that none are related to each other by the semifield operations $s$ and $e$ defined in \cite{BEL2007} and \cite{LaSh2014}. We now give a form for the multiplication arising from such a BEL-configuration for each of these. 

There is a semifield in Knuth orbit XIV which has multiplication defined by
\[
xy +(f(x)y)^{q^2},
\]
for some linear map $f$.

There is a semifield in each of the Knuth orbits XXIV, XXVI and XXVII which have multiplication defined by
\[
xy + \Tr_{q^6:q^2}(f(x)y),
\]
for some linear map $f$. We note that semifields with multiplication of the form $xy + \Tr(f(x)y)$, where $\Tr$ denotes the absolute trace function, have been studied in \cite{HoOzZh}, where it is shown that with certain restrictions on $q$ and $n$, all such semifields are isotopic to a field. The case where the trace function is taken to a non-prime field is left open. These examples, together with the examples of order $5^4$ noted in the preceding subsection, are the first non-trivial examples of semifields of this form.

We can see from the table below that there are many examples where $\brk(\S) \ne \brk(\S^d)$. Note also that semifields in orbit XIV are three-dimensional over one nucleus, and six-dimensional over the other two nuclei. The BEL-rank of a semifield in this Knuth-orbit is either $2$ or $4$. Hence we can have an example of a semifield such that $\brk(\S) > l$, where $l$ is the dimension of $\S$ over its left nucleus.

\[
\begin{array}{|c|ccc||c|ccc||c|ccc||c|ccc|}
\hline
&\S&\S^{d}&\S^{td}&&\S&\S^{d}&\S^{td}&&\S&\S^{d}&\S^{td}&&\S&\S^{d}&\S^{td}\\
\hline
1	&	1	&	1	&	1	&	21	&	4	&	4	&	4	&	41	&	3	&	3	&	3	&	61	&	3	&	3	&	3	\\
2	&	2	&	2	&	2	&	22	&	3	&	3	&	3	&	42	&	4	&	3	&	4	&	62	&	4	&	5	&	4	\\
3	&	3	&	3	&	3	&	23	&	3	&	3	&	3	&	43	&	3	&	3	&	3	&	63	&	3	&	3	&	3	\\
4	&	2	&	2	&	2	&	24	&	2	&	3	&	2	&	44	&	3	&	4	&	3	&	64	&	4	&	5	&	4	\\
5	&	2	&	2	&	2	&	25	&	4	&	4	&	4	&	45	&	3	&	3	&	3	&	65	&	4	&	4	&	4	\\
6	&	2	&	2	&	2	&	26	&	2	&	3	&	2	&	46	&	5	&	4	&	5	&	66	&	4	&	4	&	4	\\
7	&	2	&	2	&	2	&	27	&	2	&	3	&	2	&	47	&	4	&	4	&	4	&	67	&	3	&	3	&	3	\\
8	&	2	&	2	&	2	&	28	&	4	&	4	&	4	&	48	&	4	&	5	&	4	&	68	&	3	&	3	&	3	\\
9	&	2	&	2	&	2	&	29	&	4	&	4	&	4	&	49	&	5	&	5	&	5	&	69	&	3	&	3	&	3	\\
10	&	2	&	2	&	2	&	30	&	4	&	4	&	4	&	50	&	4	&	4	&	4	&	70	&	3	&	4	&	3	\\
11	&	3	&	3	&	3	&	31	&	4	&	4	&	4	&	51	&	3	&	3	&	3	&	71	&	4	&	4	&	4	\\
12	&	4	&	4	&	4	&	32	&	4	&	4	&	4	&	52	&	4	&	4	&	4	&	72	&	4	&	4	&	4	\\
13	&	4	&	4	&	4	&	33	&	5	&	3	&	5	&	53	&	3	&	3	&	3	&	73	&	4	&	4	&	4	\\
14	&	4	&	2	&	4	&	34	&	4	&	5	&	4	&	54	&	3	&	3	&	3	&	74	&	4	&	4	&	4	\\
15	&	4	&	4	&	4	&	35	&	5	&	4	&	5	&	55	&	4	&	3	&	4	&	75	&	4	&	4	&	4	\\
16	&	4	&	4	&	4	&	36	&	4	&	4	&	4	&	56	&	3	&	3	&	3	&	76	&	5	&	5	&	5	\\
17	&	3	&	3	&	3	&	37	&	4	&	5	&	4	&	57	&	4	&	4	&	4	&	77	&	4	&	4	&	4	\\
18	&	4	&	4	&	4	&	38	&	4	&	5	&	4	&	58	&	3	&	3	&	3	&	78	&	4	&	4	&	4	\\
19	&	3	&	3	&	3	&	39	&	5	&	5	&	5	&	59	&	4	&	3	&	4	&	79	&	4	&	4	&	4	\\
20	&	4	&	4	&	4	&	40	&	3	&	4	&	3	&	60	&	5	&	5	&	5	&	80	&	4	&	4	&	4	\\
\hline
\end{array}
\]

This data suggests the following open questions: 
\begin{center}
\begin{itemize}
\item
{\it Is the BEL-rank of any finite semifield of order $q^n$ with centre containing $\Fq$ is at most $n-1$?}
\item
{\it Can we construct or classify semifields which have BEL-rank two, but are neither generalised twisted fields nor two-dimensional over a nucleus?}
\end{itemize}
\end{center}

\bibliographystyle{plain}

\end{document}